\newtheorem{theorem}{Theorem}[section]
\newtheorem{lemma}[theorem]{Lemma}
\newtheorem{proposition}[theorem]{Proposition}
\theoremstyle{remark}
\newtheorem{remark}[theorem]{Remark}
\newcommand{\cadlag} {c\`{a}dl\`{a}g\ }
\newcommand{\Levy} {L\'{e}vy~}
\newcommand{\LevyIto}{L\'{e}vy-Ito~}
\newcommand{\ud}{\mathrm{d}}
\title{Exact conditions for no ruin for the generalised Ornstein-Uhlenbeck process
\thanks{The authors wish to dedicate this paper to the memory of Chris Heyde, mentor and friend.}}
\author{Damien Bankovsky \thanks{{Email: bankovd@maths.anu.edu.au Dept. of Mathematics, Australian National University.}}\and
  Allan Sly \thanks{{Email: sly@stat.berkeley.edu Dept. of Statistics, U.C. Berkeley. Supported by NSF grants DMS-0528488 and DMS-0548249 }}}
\begin{document}

\maketitle

\begin{abstract}
For a bivariate \Levy process $(\xi_t,\eta_t)_{t\geq 0}$ the
generalised Ornstein-Uhlenbeck (GOU) process is defined as
\[V_t:=e^{\xi_t}\left(z+\int_0^t e^{-\xi_{s-}}\ud
\eta_s\right),~~t\ge0,\]where $z\in\mathbb{R}.$ We define necessary
and sufficient conditions under which the infinite horizon ruin
probability for the process is zero. These conditions are stated in
terms of the canonical characteristics of the \Levy process and
reveal the effect of the dependence relationship between $\xi$ and
$\eta.$ We also present technical results which explain the
structure of the lower bound of the GOU.
\end{abstract}

\bigskip

\noindent{Keywords: \Levy process, Generalised Ornstein-Uhlenbeck
process, Exponential functionals of \Levy processes, Ruin
probability}

\noindent{\emph{MSC}: primary 60H30; secondary 60J25; 91B30}

\section{Introduction and Notation}

For a bivariate \Levy process $(\xi,\eta)=(\xi_t,\eta_t)_{t\geq 0}$
the generalised Ornstein-Uhlenbeck (GOU) process $V=(V_t)_{t\ge0},$
where $V_0=z\in\mathbb{R},$ is defined as
\begin{equation}\label{GOU definition} V_t:=e^{\xi_t}\left(z+\int_0^t e^{-\xi_{s-}}\ud \eta_s\right).\end{equation}
It is closely related to the stochastic integral process
$Z=(Z_t)_{t\ge0}$ defined as
\begin{equation}\label{Z definition} Z_t:=\int_0^t e^{-\xi_{s-}}\ud \eta_s.\end{equation}
The GOU is a time homogenous strong Markov process. For an overview
of its properties see Maller et al. \cite{MallerMullerSzimayer07},
and Carmona et al. \cite{CarmonaPetitYor01}. Applications are many,
and include option pricing (e.g Yor \cite{Yor01}), financial time
series (e.g. Kl\"{u}ppelberg et al.
\cite{KluppelbergLindnerMaller04}), insurance, and risk theory (e.g.
Paulsen \cite{Paulsen98}, Nyrhinen \cite{Nyrhinen01}).

In this paper, we present some basic foundational results on the
ruin probability for the GOU, in a very general setup. There are
only a few papers dealing with this, or with passage-time problems
for the GOU. Patie \cite{Patie05}, and Novikov \cite{Novikov03},
give first passage-time distributions in the special case that
$\xi_t=\lambda t$ for $\lambda\in\mathbb{R},$ and $\eta$ has no
positive jumps. With regard to ruin probability, Nyrhinen
\cite{Nyrhinen01} and Kalashnikov and Norberg
\cite{KalashnikovNorberg02} discretize the GOU into a stochastic
recurrence equation. Under a variety of conditions, they produce
some asymptotic equivalences for the infinite horizon ruin
probability. Other work on the GOU ruin probability comes from
Paulsen \cite{Paulsen98}. In the special case that $\xi$ and $\eta$
are independent, Paulsen gives conditions for certain ruin for the
GOU, and a formula for the ruin probability under conditions which
ensure that the integral process $Z_t$ converges almost surely as
$t\rightarrow\infty.$

Since these papers were written, the theory relating to the GOU, and
to the process $Z,$ has advanced.  In the general case where
dependence between $\xi$ and $\eta$ is allowed, Erickson and Maller
\cite{EricksonMaller05} present necessary and sufficient conditions
for the almost sure convergence of $Z_t$ to a random variable
$Z_\infty$ as $t\rightarrow\infty.$ Bertoin et al.
\cite{BertoinLindnerMaller07} present necessary and sufficient
conditions for continuity of the distribution of $Z_\infty$ given it
exists. Lindner and Maller \cite{LindnerMaller05} show that strict
stationarity of $V$ is equivalent to convergence of an integral
$\int_0^t e^{\xi_{s-}}\ud L_s,$ where $L$ is an auxiliary \Levy
process composed of elements of $\xi$ and $\eta.$ Note that in
\cite{LindnerMaller05} the sign of the process $\xi$ is reversed in
the definition of the GOU. For our purposes it suits to have the GOU
in the form $V_t:=e^{\xi_t}\left(z+Z_t\right)$ and to study the
behaviour of $V$ in terms of $Z.$

Our main results are presented in Section 2. Theorem \ref{ruin prob
corollary} presents exact necessary and sufficient conditions under
which the infinite horizon ruin probability for the GOU is zero.
These conditions do not relate to the convergence of $Z$ or
stationarity of $V$ or to any moment conditions. Instead they are
are expressed at a more basic level, directly on the \Levy measure
of $(\xi,\eta).$ Theorem \ref{ruin theorem} shows that $P(Z_t<0)>0$
for all $t>0$ as long as $\eta$ is not a subordinator. This result
is an important building block in the proof of Theorem \ref{ruin
prob corollary}. Finally in Section 2, Theorem \ref{ruin formula
theorem} extends a ruin probability formula in Paulsen
\cite{Paulsen98}, presenting a slightly different version which
deals with the general dependent case, and applies whenever $Z_t$
converges almost surely to a random variable $Z_\infty$ as
$t\rightarrow\infty.$

Section 3 contains technical results of interest, which
characterise what we call the lower bound function of the GOU, and
are used to prove the main ruin probability theorem. Section 4
contains proofs of the results stated in Sections 2 and 3.

\subsection{Notation}
We now set out our theoretical framework and notation. Let
$(X_t)_{t\geq 0}:=(\xi_t,\eta_t)_{t\geq 0}$ be a bivariate \Levy
process with $\xi_0=\eta_0=0,$ adapted to a filtered complete
probability space $(\Omega,
\mathscr{F},\mathbb{F}=(\mathscr{F}_t)_{0\le t\le\infty},P)$
satisfying the ``usual hypotheses'' (see Protter \cite{Protter04}
p.3), where $\xi$ and $\eta$ are not identically zero. Assume the
$\sigma$-algebra $\mathscr{F}$ and the filtration $\mathbb{F}$ are
generated by $(\xi,\eta)$, that is,
$\mathscr{F}:=\sigma\left((\xi,\eta)_t:0\le t<\infty\right)$ and
$\mathscr{F}_t:=\sigma\left((\xi,\eta)_s:0\le s\le t\right).$ Note
that the processes $V$ and $Z$ are defined with respect to
$\mathbb{F}.$

The characteristic triplet of $(\xi,\eta)$ will be written
$\left((\tilde{\gamma}_\xi,\tilde{\gamma}_\eta),\Sigma_{\xi,\eta},\Pi_{\xi,\eta}\right)$
where $(\tilde{\gamma}_\xi,\tilde{\gamma}_\eta)\in\mathbb{R}^2,$ the
Gaussian covariance matrix $\Sigma_{\xi,\eta}$ is a non-stochastic
$2\times2$ positive definite matrix, and the \Levy measure
$\Pi_{\xi,\eta}$ is a $\sigma-$finite measure on
$\mathbb{R}^2\setminus\{0\}$ satisfying the condition
$\int_{\mathbb{R}^2}\min\{|z|^2,1\}\Pi_{\xi,\eta}(\ud z)<\infty,$
where $|\cdot|$ denotes Euclidean distance. For details on \Levy
processes see Bertoin \cite{Bertoin96} and Sato \cite{Sato99}.

The \LevyIto decomposition (Sato \cite{Sato99}, Ch.4,) breaks down
$(\xi,\eta)$ into a sum of four mutually independent \Levy
processes:
\begin{eqnarray}\label{2 dim levy ito decomposition}\nonumber(\xi_t,\eta_t)&=&(\tilde{\gamma}_\xi,\tilde{\gamma}_\eta)t+
(B_{\xi,t},B_{\eta,t})+\int_{|z|<1}z\left(N_{\xi,\eta,t}(\cdot,\ud
z)-t\Pi_{\xi,\eta}(\ud
z)\right)\\
&&~~+\int_{|z|\ge1}zN_{\xi,\eta,t}(\cdot,\ud z),
\end{eqnarray}
where $B_\xi$ and $B_\eta$ are Brownian motions such that
$(B_\xi,B_\eta)$ has covariance matrix $\Sigma_{\xi,\eta},$ and
$N_{\xi,\eta,t}(\omega,~)$ is the random jump measure of
$(\xi,\eta)$ such that
$E\left(N_{\xi,\eta,1}(\omega,\Lambda)\right)=\Pi_{\xi,\eta}(\Lambda)$
for $\Lambda$ a Borel subset of $\mathbb{R}^2\setminus\{0\}$ whose
closure does not contain $0.$ We can write (see Protter
\cite{Protter04}, p.31)
\begin{equation}(\tilde{\gamma}_\xi,\tilde{\gamma}_\eta)=E\left((\xi_1,\eta_1)-\int_{|z|\ge1}zN_{\xi,\eta,1}(\cdot,\ud z)\right).
\end{equation}
The characteristic triplets of $\xi$ and $\eta$ as one-dimensional
\Levy processes are denoted $(\gamma_\xi,\sigma_\xi^2,\Pi_\xi)$ and
$(\gamma_\eta,\sigma_\eta^2,\Pi_\eta)$ respectively, where
\begin{equation}\Pi_\xi(\Gamma)=\Pi_{\xi,\eta}(\Gamma\times\mathbb{R})\hbox{~~and~~}
\Pi_\xi(\Gamma)=\Pi_{\xi,\eta}(\mathbb{R}\times\Gamma)
\end{equation} for $\Gamma$ a Borel subset of $\mathbb{R}\setminus\{0\}$ whose
closure does not contain $0,$
\begin{equation}\label{2 dim to 1 dim equation}(\gamma_\xi,\gamma_\eta)=(\tilde{\gamma}_\xi,\tilde{\gamma}_\eta)+
\int_{\{|x|\le 1,|y|>\sqrt{1-x^2}\}}(x,y)\Pi_{\xi,\eta}(\ud(x,y)),
\end{equation}
and $\sigma_\xi^2$ and $\sigma_\eta^2$ are the upper left and lower
right entries respectively, in the matrix $\Sigma_{\xi,\eta}.$
Analagous to (\ref{2 dim levy ito decomposition}), we can write the
\LevyIto decomposition of $\xi$ as
\begin{equation}\xi_t=\gamma_\xi t+B_{\xi,t}+\int_{|x|<1}x\left(N_{\xi,t}(\cdot,\ud x)-t\Pi_\xi(\ud x)\right)+\int_{|x|\ge 1}xN_{\xi,t}(\cdot,\ud x),
\end{equation}
where
\begin{equation}\label{1 dim drift equation}\gamma_\xi=E\left(\xi_1-\int_{|x|\ge 1}xN_{\xi,1}(\cdot,\ud
x)\right),
\end{equation}
and similarly for $\eta.$ For further details on \LevyIto
decompositions, see Sato \cite{Sato99}, Chapter 4.

A \Levy process is said to be a subordinator if it takes only
non-negative values, which implies that its sample paths are
non-decreasing (Bertoin \cite{Bertoin96}, p.71).

Stochastic integrals are interpreted according to Protter
\cite{Protter04}. The integral $\int_a^b$ is interpreted as
$\int_{[a,b]}$ and the integral $\int_{a+}^b$ as $\int_{(a,b]}.$ The
jump of a process $Y$ at $t$ is denoted by $\Delta Y_t:=Y_t-Y_{t-}.$
The \Levy measure of a \Levy process $Y$ is denoted by $\Pi_Y.$ If
$T$ is a fixed time or a stopping time denote the process $Y$
stopped at $T$ by $Y^T$ and define it by $Y_t^T:=Y_{t\wedge
T}:=Y_{\min\{t,T\}}.$ For a function $f(x)$ define
$f^+(x):=f(x)\vee0:=\max\{f(x),0\}$ and $f^-(x):=\max\{-f(x),0\}.$
The symbol $1_\Lambda$ will denote the characteristic function of a
set $\Lambda.$ The symbol $=_D$ will denote equality in distribution
of two random variables. The initials ``iff'' will denote the phrase
``if and only if''. The symbol ``a.s'' will denote equality, or
convergence, almost surely. Let $T_z$ denote the first time $V$
drops below zero, so
$$T_z:=\inf\left\{t>0:V_t<0\big|V_0=z\right\}$$ and $T_z:=\infty$ whenever
$V_t>0 ~~\forall t>0$ and $V_0=z.$ For $z\ge 0,$ define the infinite
horizon ruin probability function to be
$$\psi(z):=P\left(\inf_{t\geq 0}
V_t<0\big|V_0=z\right)=P(T_z<\infty).$$

\section{Ruin Probability Results}
Our results are given in terms of regions of support of the \Levy
measure $\Pi_{\xi,\eta}$. We define some notation, beginning with
the following quadrants of the plane. Let
$A_1:=\left\{(x,y)\in\mathbb{R}^2:x\ge 0,y\ge0\right\},$ and
similarly, let $A_2,$ $A_3$ and $A_4$ be the quadrants in which
$\{x\ge0,y\le0\},$ $\{x\le0,y\le0\}$ and $\{x\le0,y\ge0\}$
respectively. For each $i=1,2,3,4$ and $u\in\mathbb{R}$ define
$$A_i^u:=\left\{(x,y)\in A_i:y-u(e^{-x}-1)<0\right\}.$$

These sets are defined such that if $(\Delta\xi_t,\Delta\eta_t)\in
A_i^u$ and $V_{t-}=u,$ then $\Delta V_t<0,$ as we see from the
equation
\begin{eqnarray}\label{jump equation 1}\nonumber\Delta V_t&=&V_t-V_{t-}\\&=&
e^{\xi_t}\Big(z+\int_0^{t-} e^{-\xi_{s-}}\ud \eta_s+e^{-\xi_t}\Delta\eta_t\Big)-e^{\xi_{t-}}\Big(z+\int_0^{t-} e^{-\xi_{s-}}\ud \eta_s\Big)\nonumber\\
&=&(e^{\xi_t}-e^{\xi_{t-}})\Big(z+\int_0^{t-} e^{-\xi_{s-}}\ud \eta_s\Big)+e^{\xi_t}e^{\xi_{t-}}\Delta\eta_t\nonumber\\
&=&(e^{\Delta\xi_t}-1)V_{t-}+e^{\Delta\xi_t}\Delta\eta_t.
\end{eqnarray}
If  $u\le 0$ then $A_2^u=A_2$ and $A_4^u=\emptyset.$ As $u$
decreases to $-\infty$, the sets $A_1^u$ shrink, whilst $A_3^u$
expand. Define
\[
\theta_1:= \left\{ \begin{array}{ll}
\sup\left\{u\le 0:\Pi_{\xi,\eta}(A_1^u)>0\right\} & \\
-\infty~~~~\textrm{if~}\Pi_{\xi,\eta}(A_1)=0 &
\end{array} \right.\,,
\theta_3:= \left\{ \begin{array}{ll}
\inf\left\{u\le 0:\Pi_{\xi,\eta}(A_3^u)>0\right\} &\\
0~~~~\textrm{if~}\Pi_{\xi,\eta}(A_3)=0 &
\end{array} \right.\,.
\]
If $u\ge 0$ then $A_3^u=A_3$ and $A_1^u=\emptyset.$ As $u$ increases
to $\infty$, the sets $A_2^u$ shrink, whilst $A_4^u$ expand. Define
\[
\theta_2:= \left\{ \begin{array}{ll}
\sup\left\{u\ge 0:\Pi_{\xi,\eta}(A_2^u)>0\right\} & \\
0~~~~\textrm{if~}\Pi_{\xi,\eta}(A_2)=0, &
\end{array} \right.\,,
\theta_4:= \left\{ \begin{array}{ll}
\inf\left\{u\ge 0:\Pi_{\xi,\eta}(A_4^u)>0\right\} &\\
\infty~~~~\textrm{if~}\Pi_{\xi,\eta}(A_4)=0 &
\end{array} \right.\,.
\]
For each $i=1,2,3,4,$ note that $\Pi_{\xi,\eta}(A_i^{\theta_i})=0,$
since in the definitions of $A_i^u$ we are requiring that
$y-u(e^{-x}-1)$ be strictly less than zero.

\begin{theorem}[Exact conditions for no ruin for the GOU]\label{ruin prob corollary}
The ruin probability $\psi(z)=0$ for large enough $z\ge0$ if and
only if the \Levy measure satisfies $\Pi_{\xi,\eta}(A_3)=0,$
$\theta_2\le\theta_4,$ and:
\begin{itemize}
\item when $\sigma_\xi^2\neq0$ the Gaussian covariance matrix is of form $\Sigma_{\xi,\eta}= \left[
  \begin{array}{ r r }
     1 & -u \\
     -u & u^2
  \end{array} \right]\sigma_\xi^2$ for some
  $u\in[\theta_2,\theta_4]$ satisfying
\begin{equation}\label{finite drift equation}\tilde{\gamma}_\eta+u\tilde{\gamma_\xi}-\frac{1}{2}u\sigma_\xi^2-
\int_{\{y-u(e^{-x}-1)>0\}\cap\{x^2+y^2<1\}}(ux+y)\Pi_{\xi,\eta}(\ud(x,y))\ge0;
\end{equation}
\item when $\sigma_\xi^2=0$ the Gaussian covariance matrix is of form
$\Sigma_{\xi,\eta}=0$ and there exists $u\in[\theta_2,\theta_4]$
satisfying (\ref{finite drift equation}).
\end{itemize}
If $\sigma_\xi^2\neq0$ and the conditions of the theorem hold, then
$\psi(z)=0$ for all $z\ge u:=\frac{\sigma_\eta}{\sigma_\xi},$ whilst
$\psi(z)>0$ for all $z<u.$

If $\sigma_\xi^2=0$ and the conditions of the theorem hold, then
$\psi(z)=0$ for all $z\ge
u':=\max\left\{\theta_2,\inf\{u>0:(\ref{finite drift
equation})\hbox{~holds}\}\right\},$ whilst $\psi(z)>0$ for all
$z<u'.$
\end{theorem}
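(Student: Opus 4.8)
The plan is to upgrade the ``$\psi(z)=0$ for large enough $z$'' part of the theorem to a sharp threshold, by a monotonicity reduction followed by matching upper and lower estimates. Write $c$ for the claimed threshold: $c=u=\sigma_\eta/\sigma_\xi$ when $\sigma_\xi^2\neq0$ and $c=u'$ when $\sigma_\xi^2=0$; in both cases the hypotheses give $c\in[\theta_2,\theta_4]$ and $c\ge0$. Since for each fixed $(t,\omega)$ the map $z\mapsto V_t(z)=e^{\xi_t}(z+Z_t)$ is strictly increasing, $z\mapsto\inf_{t\ge0}V_t(z)$ is non-decreasing and $\psi$ is non-increasing; moreover $V_t\ge0$ iff $z+Z_t\ge0$, so $\psi(z)=P(\inf_{t\ge0}Z_t<-z)$, and hence $\psi(z)=0$ exactly for $z\ge z^{\ast}$, where $z^{\ast}$ is the negative of the essential infimum over $\omega$ of $\inf_{t\ge0}Z_t$; in particular $\psi(z^{\ast})=0$ and $\psi(z)>0$ for every $z<z^{\ast}$, directly from the definition of the essential infimum. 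So it suffices to prove $z^{\ast}=c$, which I do by sandwiching.

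For $z^{\ast}\le c$ I show that $V$ started at any $z\ge c$ stays $\ge c$ (so $\ge0$) almost surely, which forces $\psi(z)=0$. Jumps are handled by the elementary fact that $V_{t-}=w\ge c$ implies $V_t=e^{\Delta\xi_t}(w+\Delta\eta_t)\ge c$: writing $(x,y)=(\Delta\xi_t,\Delta\eta_t)$, a.s.\ $(x,y)\notin A_3$ since $\Pi_{\xi,\eta}(A_3)=0$; on $A_1$ plainly $e^{x}(w+y)\ge w\ge c$; and on $A_2$ (resp.\ $A_4$) the fact that $\Pi_{\xi,\eta}(A_2^{\theta_2})=0$ (resp.\ $\Pi_{\xi,\eta}(A_4^{\theta_4})=0$) gives a.s.\ $e^{x}(y+\theta_2)\ge\theta_2$ (resp.\ $e^{x}(y+\theta_4)\ge\theta_4$), whence $e^{x}(w+y)=e^{x}(y+\theta_i)+e^{x}(w-\theta_i)\ge c$ using $e^{x}\ge1$ for $x\ge0$, $e^{x}\le1$ for $x\le0$, and $\theta_2\le c\le\theta_4$. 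The continuous part between jumps is treated by case. When $\sigma_\xi^2\neq0$, It\^{o}'s formula for $e^{\xi_t}(z+Z_t)$ with $\Sigma_{\xi,\eta}$ of the prescribed form (equivalently $B_\eta=-uB_\xi$ a.s.) shows the Brownian coefficient of $V$ is $(V_{t-}-u)\sigma_\xi$, which vanishes at $u$, while (\ref{finite drift equation}) says the small-jump-compensated drift of $V$ at level $u$ is $\ge0$; hence $u$ is an inaccessible lower boundary, which is exactly what the lower-bound-function analysis of Section~3 provides. When $\sigma_\xi^2=0$ we have $\Sigma_{\xi,\eta}=0$ and $V$ moves continuously only through a drift which, being $\ge0$ at and above $c$ by (\ref{finite drift equation}) (together with monotonicity of its left-hand side in $u$), cannot carry $V$ below $c$.

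For $z^{\ast}\ge c$ I construct, for each $z\in[0,c)$, an event of positive probability on which $V$ reaches $(-\infty,0)$. If $\sigma_\xi^2\neq0$, then below level $u$ the Brownian coefficient $(V_{t-}-u)\sigma_\xi$ is bounded away from $0$; conditioning on the absence of large jumps over a short time interval leaves $V$ close to a regular one-dimensional diffusion on $(-\infty,u)$, which started from any point of $[0,u)$ reaches a fixed negative level with positive probability (immediate at $z=0$ by non-degeneracy there), and a support/comparison argument transfers this to $V$. If $\sigma_\xi^2=0$, then $z<u'$ means $z<\theta_2$ or $z<\inf\{v>0:(\ref{finite drift equation})\text{ holds}\}$: in the former case $\Pi_{\xi,\eta}(A_2^{w})>0$ for every $w\in[0,\theta_2)$ supplies a positive-rate family of jumps each strictly decreasing $V$ while $V<\theta_2$, and in the latter the drift of $V$ is strictly negative on the relevant range of levels; in either case a pathwise comparison drives $V$ through $0$ with positive probability. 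Combined with the previous paragraph this gives $z^{\ast}=c$, and with the first paragraph, the full statement.

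The main obstacle is this last estimate in the finite-variation regime $\sigma_\xi^2=0$: just below $u'=\max\{\theta_2,\inf\{v>0:(\ref{finite drift equation})\text{ holds}\}\}$ the only downward forces are small jumps whose sizes may vanish as $V\downarrow0$, together with a drift that may be only weakly negative, so one must show carefully that their combined effect is a genuine crossing of $0$ rather than a mere approach to it; this in turn uses monotonicity of the left-hand side of (\ref{finite drift equation}) in $u$, so that it fails on all of $(0,\inf\{v>0:(\ref{finite drift equation})\text{ holds}\})$ and holds on $[\inf\{v>0:(\ref{finite drift equation})\text{ holds}\},\theta_4]$. A secondary technical point is making rigorous the ``inaccessible boundary at $u$'' claim when $\sigma_\xi^2\neq0$ in the presence of infinitely many small jumps, which is precisely the role of Section~3.
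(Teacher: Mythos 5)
Your reduction to a sharp threshold $z^*$ via monotonicity, and your jump-by-jump check that $V_{t-}\ge c$ together with $\Pi_{\xi,\eta}(A_i^{\theta_i})=0$ forces $V_t\ge c$, are both correct and closely parallel the ``no negative jumps of $\eta-uW$'' computation inside Theorem~\ref{sub conditions theorem}. But the proposal has a genuine gap at exactly the point you flag as ``the main obstacle'': the claim that for $z<c$ the process $V$ reaches $(-\infty,0)$ with positive probability is never actually established. The phrases ``a support/comparison argument transfers this to $V$'' and ``a pathwise comparison drives $V$ through $0$'' are placeholders for the hardest step, and in the $\sigma_\xi^2=0$ regime, with only small jumps and a drift that may be only marginally negative, this is precisely where an informal argument collapses (one only gets approach to the level, not a crossing). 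In the paper that step is carried by Theorem~\ref{ruin theorem} --- $\eta$ not a subordinator implies $P(Z_T<0)>0$ for every $T>0$, applied with $\eta$ replaced by $\eta-uW$ --- and the proof of that theorem is itself nontrivial: a change of measure built from Lemmas~\ref{measure equivalence lemma} and~\ref{existence of Y lemma} together with a Burkholder--Davis--Gundy martingale bound from Lemma~\ref{martingale lemma}. Nothing in your proposal replicates that work, so the lower bound $z^*\ge c$, the sharpness of the threshold, and hence the ``only if'' direction of the theorem (necessity of the stated conditions) are all left unproven.

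There is also a structural problem with the upper bound. You explicitly defer the continuous and compensated-small-jump part to ``the lower-bound-function analysis of Section~3,'' but that analysis is exactly the algebraic identity $V_t=u+e^{\xi_t}\left(z-u+\int_0^t e^{-\xi_{s-}}\,d(\eta_s-uW_s)\right)$ obtained from Proposition~\ref{W conditions lemma} (the construction of $W$ with $e^{-\xi}=\epsilon(W)$) together with Theorem~\ref{sub conditions theorem} (which translates ``$\eta-uW$ is a subordinator'' into the covariance form, the $A_i$ conditions, and (\ref{finite drift equation})). If you grant yourself that machinery, the paper's three-line derivation of Theorem~\ref{ruin prob corollary} from Theorems~\ref{lower bound theorem} and~\ref{sub conditions theorem} is already available and strictly cleaner than the direct It\^{o}-boundary heuristic you sketch; if you do not, then the ``inaccessible lower boundary at $u$'' claim is unsupported in the infinite-activity regime, which is exactly where an informal drift-plus-diffusion picture for $V$ near level $u$ is unreliable.
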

We now discuss some examples and special cases which illustrate and
amplify the results in Theorem \ref{ruin prob corollary}.
\begin{remark}\label{main remarks}
\begin{enumerate}

{\rm\item Suppose that $(\xi,\eta)$ is continuous. We can then write
$(\xi_t,\eta_t)=(\gamma_\xi t,\gamma_\eta
t)+(B_{\xi,t},B_{\eta,t}).$ Theorem \ref{ruin prob corollary} states
that $\psi(z)=0$ for all $z\ge u$ and $\psi(z)>0$ for all $z<u,$ if
and only if there exists $u>0$ such that $B_\eta=-uB_\xi,$ and
$(\gamma_\xi-\frac{1}{2}\sigma_\xi^2)u+\gamma_\eta\ge0.$ For example
we could have
\begin{equation}\label{continuous
example}(\xi_t,\eta_t):=(B_t+ct,-B_t+(1/2-c)t),
\end{equation} where $c\in\mathbb{R}.$ Then Theorem \ref{ruin prob corollary} implies that $\psi(z)=0$ for
all $z\ge u=\frac{\sigma_\eta}{\sigma_\xi}=1$ whilst $\psi(z)>0$ for
all $z<1.$ In this simple case, we can check the result directly.
Using Ito's formula we obtain
$$Z_t=-\int_0^te^{-(B_s+cs)}\ud B_s+(1/2-c)\int_0^te^{-(B_s+cs)}\ud s=e^{-(B_s+cs)}-1,$$ and hence a lower bound for $Z$ is
$-1.$

\item Suppose that $(\xi,\eta)$ is a finite variation \Levy process. Then we must have $\Sigma_{\xi,\eta}=0$ and
$\int_{|z|<1}|z|\Pi_{\xi,\eta}(\ud z)<\infty$.
We can define the drift vector as
\[(d_\xi,d_\eta):=\gamma_\eta-\int_{|z|<1}z\Pi_{\xi,\eta}(\ud z)
\] and write
\[(\xi_t,\eta_t)=(d_\xi,d_\eta)t+\int_{\mathbb{R}^2}zN_{\xi,\eta,t}(\cdot,\ud z).
\]
In this situation, the conditions of Theorem \ref{ruin prob
corollary} can be made more explicit. Theorem \ref{ruin prob
corollary} states that $\psi(z)=0$ for large enough $z$ if and only
if $\Pi_{\xi,\eta}(A_3)=0,$ $\theta_2\le\theta_4,$ and at least one
of the following is true:
\begin{itemize}
\item $d_\xi=0,$ and $d_\eta\ge 0$; or
\item $d_\xi>0$ and $-\frac{d_\eta}{d_\xi}\le\theta_4$; or
\item $d_\eta>0,$ and $d_\xi<0,$ such that $-\frac{d_\eta}{d_\xi}\ge\theta_2.$
\end{itemize}
If the second property holds, then $\psi(z)=0$ for all
$z\ge\max\{\theta_2,-\frac{d_\eta}{d_\xi}\}$ and $\psi(z)>0$ for all
$z<\max\{\theta_2,-\frac{d_\eta}{d_\xi}\}.$ If the other properties
hold, then $\psi(z)=0$ for all $z\ge\theta_2$ and $\psi(z)>0$ for
all $z<\theta_2.$

These results follow easily by transforming condition (\ref{finite
drift equation}) into conditions on $(d_\xi,d_\eta).$ For a simple
example, let $N_t$ be a Poisson process with parameter $\lambda,$
let $c>0$ and let
\begin{equation}\label{jump example}(\xi_t,\eta_t):=(-ct+N_t,2ct-N_t).
\end{equation}
Then we are in the third case above, and $\psi(z)=0$ for all
$z\ge\theta_2=\frac{e}{e-1},$ and $\psi(z)>0$ for all
$z<\frac{e}{e-1}.$ In this simple case, we can verify the results by
direct but tedious calculations which we omit here.

\item The case in which $\xi$ and $\eta$ are independent is analysed in Paulsen
\cite{Paulsen98}. In the cases $E(\xi_1)<0$ and $E(\xi_1)=0,$ and
under certain moment conditions, he shows that $\psi(z)=1$ for all
$z\ge 0.$ Theorem \ref{ruin prob corollary} shows that the situation
changes when dependence is allowed. The continuous process defined
in (\ref{continuous example}), and the jump process defined in
(\ref{jump example}), illustrate this difference. Each process
trivially satisfies Paulsen's moment conditions and can satisfy
$E(\xi_1)<0,$ or $E(\xi_1)=0,$ depending on the choices of $c$ and
$\lambda,$ however it is not the case that $\psi(z)=1$ for all $z\ge
0.$

\item If $\eta$ is a subordinator then $Z_t\ge 0$ for all $t\ge0,$
and hence $\psi(z)=0$ for all $z\ge 0.$ Theorem \ref{ruin prob
corollary} agrees with this trivial case. By Sato \cite{Sato99},
p.137, $\eta$ is a subordinator if and only if the following three
conditions hold:
\begin{itemize}
\item $\sigma_\eta^2=0$, so $\eta$
has no Brownian component;
\item $\Pi_\eta((-\infty,0))=0,$ so
$\eta$ has no negative jumps;
\item $d_\eta\ge 0,$ where
\[
d_\eta:=\gamma_\eta-\int_{(0,1)}y\Pi_\eta(\ud
y)=E\left(\eta_1-\int_{(0,\infty)}yN_{\eta,1}(\cdot,\ud y)\right).\]
Note that when $\Pi_\eta((-\infty,0))=0, $then $d_\eta$ exists and
$d_\eta\in[-\infty,\infty),$ where $d_\eta=-\infty$ iff
$\int_{(0,1)}y\Pi_\eta(\ud y)=\infty$.
\end{itemize}
Now $\sigma_\eta^2=0$ implies that $\Sigma_{\xi,\eta}= \left[
  \begin{array}{ r r }
     1 & 0 \\
     0 & 0
  \end{array} \right]\sigma_\xi^2.$ When $\eta$ has no negative
  jumps, then
$\Pi_{\xi,\eta}(A_3)=0=\Pi_{\xi,\eta}(A_2),$ and hence
$0=\theta_2\le\theta_4.$ The third property, $d_\eta\ge0,$ implies
that (\ref{finite drift equation}) is satisfied for $u=0,$ since
(\ref{2 dim to 1 dim equation}) implies that
\begin{eqnarray*}\tilde{\gamma}_\eta-\int_{\{y>0\}\cap\{x^2+y^2<1\}}y\Pi_{\xi,\eta}(\ud(x,y))
&=&\gamma_\eta-\int_{(-1,1)\times(0,1)}y\Pi_{\xi,\eta}\left(\ud(x,y)\right)\\
&=&d_\eta.
\end{eqnarray*}
Hence, Theorem \ref{ruin prob corollary} verifies that $\psi(z)=0$
for all $z\ge u=0.$

\item The expression on the left hand side of (\ref{finite drift equation})
always exists whenever the remaining conditions of the theorem are
satisfied, however it may have the value $-\infty.$ If all
conditions of the theorem are satisfied then
\begin{equation}\label{finite variation
equation}\int_{\{y-u(e^{-x}-1)\in(0,1)\}}\left(y-u(e^{-x}-1)\right)\Pi_{\xi,\eta}(\ud(x,y))<\infty.
\end{equation}
On first viewing, (\ref{finite variation equation}) may seem
counterintuitive, as it places a constraint on the size of the
positive jumps of $V.$ However, if (\ref{finite variation equation})
does not hold, and all the other conditions, excluding (\ref{finite
drift equation}), are satisfied, then the \Levy properties of
$(\xi,\eta)$ imply that $V_t$ can drift negatively when $V_{t-}=u.$
These statements are discussed further in Remark \ref{infinite
variation remark} following Theorem \ref{sub conditions theorem}. }
\end{enumerate}
\end{remark}

\begin{theorem}\label{ruin theorem} The \Levy process $\eta$
is not a subordinator if and only if $P(Z_T<0)>0$ for any fixed time
$T>0.$
\end{theorem}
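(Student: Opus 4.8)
The plan is to treat the easy implication directly, reduce the hard one to small times by a Markov argument, and then transfer from $\eta$ to $Z$ through a case analysis on the \LevyIto decomposition of $\eta$.

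\emph{Only if.} If $\eta$ is a subordinator its paths are non-decreasing, so $\ud\eta_s$ is a non-negative measure; since $e^{-\xi_{s-}}>0$ we get $Z_T=\int_0^T e^{-\xi_{s-}}\ud\eta_s\ge 0$ a.s., hence $P(Z_T<0)=0$. So assume henceforth that $\eta$ is not a subordinator; we must show $g(T):=P(Z_T<0)>0$ for every fixed $T>0$. Splitting $\int_0^{s+t}=\int_0^s+\int_s^{s+t}$ and using stationary independent increments gives $Z_{s+t}=Z_s+e^{-\xi_s}\widetilde Z_t$, where $\widetilde Z_t:=\int_0^t e^{-\widetilde\xi_{u-}}\ud\widetilde\eta_u$ is built from the post-$s$ increments $(\widetilde\xi,\widetilde\eta)=(\xi_{s+\cdot}-\xi_s,\eta_{s+\cdot}-\eta_s)$, which has the law of $(\xi,\eta)$ and is independent of $\mathscr F_s$. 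On $\{Z_s<0\}\cap\{\widetilde Z_t\le0\}$ we have $Z_{s+t}<0$, so $g(s+t)\ge g(s)g(t)$; hence it suffices to prove $g(t)>0$ for all $t$ in some interval $(0,\delta_0)$, since then $g(T)\ge g(T/n)^n>0$ once $T/n<\delta_0$.

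\emph{From $\eta$ to $Z$ for small $t$.} By the criterion recalled in Remark~\ref{main remarks}, not being a subordinator rules out exactly the case ``$\sigma_\eta^2=0$, no negative jumps, and non-negative natural drift'', and I would split the complement into three cases. (i) \emph{$\eta$ has a Gaussian part with a component independent of that of $\xi$} (in particular whenever $\sigma_\xi^2=0$): write $B_\eta=\beta B_\xi+B_\perp$ with $B_\perp$ Brownian of variance $\sigma_\perp^2>0$ independent of $B_\xi$, hence of $(\xi,\eta-B_\perp)$; then $Z_t=\int_0^t e^{-\xi_{u-}}\ud B_{\perp,u}+\int_0^t e^{-\xi_{u-}}\ud(\eta-B_\perp)_u$, and conditionally on $(\xi,\eta-B_\perp)$ the first integral is centred Gaussian with variance $\sigma_\perp^2\int_0^t e^{-2\xi_{u-}}\ud u>0$, so $P(Z_t<0\mid\cdot)\in(0,1)$ a.s.\ and $g(t)>0$ for \emph{every} $t$. (ii) \emph{$\eta$ has negative jumps}: pick $c>0$ and $M<\infty$ with $\lambda:=\Pi_{\xi,\eta}(\{y<-c,\ |x|<M\})\in(0,\infty)$ and split $(\xi,\eta)$ into the compound Poisson process of these jumps and an independent remainder $(\xi^\flat,\eta^\flat)$, with $Z^\flat_t:=\int_0^t e^{-\xi^\flat_{s-}}\ud\eta^\flat_s$. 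On the event that $[0,t]$ contains exactly one big jump, that $\sup_{s\le t}|\xi^\flat_s|<1$, and that $\sup_{s\le t}|Z^\flat_s|$ lies below a suitable fixed constant (the last two have probability $\to1$ as $t\to0$, since $\xi^\flat_0=0$ and $Z^\flat_{0+}=0$), the single jump contributes at most $-c\,e^{-1}$ to $Z_t$ while the pre- and post-jump parts are uniformly small, forcing $Z_t<0$; this event has positive probability for all small $t$.

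\emph{(iii) Remaining cases, and the main obstacle.} Otherwise $\eta$ has no negative jumps and its Gaussian part (if any) is $\beta B_\xi$, so $\eta$ is a drift plus a non-decreasing pure-jump part plus possibly a Brownian part tied to $\xi$; non-subordination then forces a strictly negative natural drift $d_\eta$ (finite-variation case), or an infinite-variation pure-jump part, or $\beta\neq0$. In the finite-variation case, off the small-probability event of a big jump, $Z_t=d_\eta\int_0^t e^{-\xi_{s-}}\ud s+\sum_{s\le t}e^{-\xi_{s-}}\Delta\eta_s$ with $d_\eta<0$ and $\Delta\eta_s>0$; bounding $\sup_{s\le t}|\xi_s|$ and using that a driftless subordinator $J$ has $J_t/t\to0$ in probability as $t\to0$ (split its jumps at a threshold $\varepsilon$, apply Markov to the part below $\varepsilon$, then let $t\to0$ and $\varepsilon\to0$), the positive sum is $o(t)$ while the drift term is $\le -c't$ for some $c'>0$, so $g(t)\to1$. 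In the Brownian and infinite-variation sub-cases the downward movement of $\eta$ over $[0,t]$ is of order $t^{1/2}$ (of order $t^{1/\alpha}$ if $\eta$ is $\alpha$-stable, $\alpha\in(1,2)$), which dominates the $O(t)$ positive drift; one pins this down either by Itô's formula, computing the continuous part of $Z_t$ explicitly as in the continuous example of Remark~\ref{main remarks}, or by martingale moment (Paley--Zygmund type) estimates on $\int_0^t e^{-\xi_{s-}}\ud M_s$, with $M$ the compensated small-positive-jumps martingale of $\eta$, to show that $Z_t$ is sufficiently negative with probability bounded below for small $t$. This last sub-case is the genuinely delicate point: $\xi$ cannot be ``frozen'' because it is correlated with $\eta$, and the relevant negative excursion of $\eta$ is produced by compensating infinitely many small positive jumps (or by a Brownian component), so no pathwise bound on the variation of $\eta$ over $[0,t]$ is available and the offending fluctuation, though of larger order than the positive drift, must be controlled through careful truncation and moment estimates.
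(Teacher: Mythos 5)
Your reduction to small times via $g(s+t)\ge g(s)g(t)$ is a valid device, and cases (i) and (ii) are essentially correct: the conditional-Gaussian argument when $B_\eta$ has a component $B_\perp$ independent of $B_\xi$ is sound, and the isolated-large-negative-jump argument (one jump, remainder small) works after routine bookkeeping of the shift in $\xi$ across the jump time. The \emph{only if} direction is trivial and agrees with the paper.

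The problem is case (iii), where you yourself flag the gap and do not close it. When $\eta$ has no negative jumps and its Gaussian part (if any) is exactly $\beta B_\xi$, and either $d_\eta=-\infty$ (infinite-variation compensated small jumps) or $\beta\neq0$, you need to show that $\int_0^t e^{-\xi_{s-}}\ud M_s$ actually takes negative values, where $M$ is the compensated small-jump (or Brownian) martingale of $\eta$. But $\xi$ is \emph{not} independent of $M$: the jumps of $\xi$ and $\eta$ come from the same bivariate measure, and $B_\eta$ is a deterministic multiple of $B_\xi$. So a downward move of $\eta$ can be systematically accompanied by an upward move of $\xi$ that suppresses the weight $e^{-\xi_{s-}}$, and one cannot simply invoke ``local martingale with positive quadratic variation $\Rightarrow$ takes negative values.'' The continuous example $(\xi_t,\eta_t)=(B_t+ct,-B_t+(1/2-c)t)$ of Remark~\ref{main remarks}(1), where $Z_t=e^{-\xi_t}-1$ is \emph{bounded below by $-1$}, illustrates just how delicate this coupling can be; the Paley--Zygmund route you gesture at would need a second-moment/fourth-moment (or scaling) argument that you do not supply, and it is not clear how it generalises beyond stable $\eta$. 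In short, the hardest sub-case of the claim is asserted, not proved.

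The paper's proof avoids this case analysis entirely and is genuinely different in structure. After truncating to bounded jumps (so all moments exist), it observes: if $E(\eta_1)<0$ one can write $Z_T$ as a mean-zero martingale (Lemma~\ref{martingale lemma}) minus a strictly decreasing term and conclude directly; if $E(\eta_1)\ge0$ one \emph{changes measure}, constructing a process $Y=(\tau,\nu)$ whose law on $[0,T]$ is mutually absolutely continuous with that of $(\xi,\eta)$ but with $E(\nu_1)=0$ (Lemma~\ref{existence of Y lemma}: Girsanov on a Brownian piece of $\eta$, or re-weighting the intensity of a compound Poisson piece of the jumps — one of these is always available when $\eta$ is not a subordinator). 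Then $W_t=\int_0^t e^{-\tau_{s-}}\ud\nu_s$ is a non-trivial mean-zero martingale, so $P(W_T<0)>0$, and Lemma~\ref{measure equivalence lemma} transfers this to $P(Z_T<0)>0$. This handles all sub-cases — including the correlated Brownian and infinite-variation pure-jump ones that block your approach — in a single stroke, which is why the change-of-measure route is worth the extra machinery.
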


One direction of this result is trivial and has been noted above,
namely, if $\eta$ is a subordinator then $P(Z_T<0)=0$ for any $T>0.$
The other direction seems quite intuitive and in fact is implicitly
assumed by Paulsen \cite{Paulsen98} in the case when $\xi$ and
$\eta$ are independent. However even in the independent case the
proof is non-trivial. We prove it in the general case using a change
of measure argument and some analytic lemmas. As well as being of
independent interest, this result is essential in proving Theorem
\ref{ruin prob corollary}.

The final theorem in this section provides a formula for the ruin
probability in the case that $Z$ converges. Recall that $T_z$
denotes the first time $V$ drops below zero when $V_0=z,$ or
equivalently, the first time $Z$ drops below $-z.$

\begin{theorem}\label{ruin formula theorem}Suppose $Z_t$ converges a.s to a finite random variable $Z_\infty$ as $t\rightarrow\infty,$
and let $G(z):=P(Z_\infty\le z).$ Then
$$\psi(z)=\frac{G(-z)}{E\left(G(-V_{T_z})\big|T_z<\infty\right)}.$$
Note that
$G(-V_{T_z})(\omega):=P\left(\nu\in\Omega~:~Z_\infty(\nu)<-V_{T_z}(\omega)
\right).$ It is defined whenever $T_z(\omega)<\infty.$
\end{theorem}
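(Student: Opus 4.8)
The plan is to exploit the strong Markov property of the GOU at the stopping time $T_z$ together with the a.s.\ convergence $Z_t\to Z_\infty$. First I would observe that on the event $\{T_z<\infty\}$ the process $(V_t)_{t\ge T_z}$ is, conditionally on $\mathscr{F}_{T_z}$, again a GOU started from $V_{T_z}$ driven by an independent copy of $(\xi,\eta)$; more precisely, by the time-homogeneity and strong Markov property noted after \eqref{GOU definition}, one may write $V_{T_z+u}=e^{\xi'_u}(V_{T_z}+\int_0^u e^{-\xi'_{s-}}\ud\eta'_s)$ where $(\xi',\eta')$ is independent of $\mathscr{F}_{T_z}$ and distributed as $(\xi,\eta)$. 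The key point is that $Z$ converges a.s.\ iff $\int_0^\infty e^{-\xi_{s-}}\ud\eta_s$ converges (Erickson--Maller \cite{EricksonMaller05}), and this convergence is a tail property of the driving process; hence the ``post-$T_z$'' exponential functional $\int_0^\infty e^{-\xi'_{s-}}\ud\eta'_s$ also converges a.s., and one gets $V_{T_z+u}\to e^{\xi'_\infty}\cdot(\text{something})$ — but more usefully, $\inf_{u\ge 0}V_{T_z+u}<0$ iff the shifted integral process ever drops below $-V_{T_z}$, which (since the integral converges) happens with probability exactly $G(-V_{T_z})$ by definition of $G$, provided one checks that $Z_\infty$ has no atom issues at the relevant level. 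Actually the cleanest route: $P(\,\inf_{u\ge 0}\int_0^u e^{-\xi'_{s-}}\ud\eta'_s < -x\,)$ should equal $G(-x)=P(Z_\infty\le -x)$; this is where I expect to need a small argument — the infimum of the path equals $Z_\infty\wedge(\text{running inf before convergence})$, and one must argue the running infimum is a.s.\ attained in the limit or handle the boundary, perhaps using that $Z_\infty$ has the same law as $\lim$ and a $0$--$1$ type argument, or simply that $\{\inf_u Z'_u<-x\}\supseteq\{Z'_\infty<-x\}$ and the reverse holds up to a $P$-null set by continuity considerations from Bertoin et al.\ \cite{BertoinLindnerMaller07}. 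Let me instead lean on the decomposition identity directly.

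The core computation: partition the event $\{Z_\infty\le -z\}$ (equivalently $\{V_\infty\le 0\}$ in the sense $\lim V_t\le 0$, or rather the running infimum interpretation) according to whether ruin occurs. On $\{T_z=\infty\}$, $V_t\ge 0$ for all $t$, so $Z_t\ge -z$ for all $t$, hence $Z_\infty\ge -z$; combined with $\{Z_\infty\le -z\}$ this forces $Z_\infty=-z$, an event I will argue has probability zero when $G$ is continuous at $-z$ (and if not, a limiting/monotonicity argument in $z$ handles it). Therefore, up to null sets,
\[
\{Z_\infty\le -z\}=\{Z_\infty\le -z\}\cap\{T_z<\infty\}.
\]
On $\{T_z<\infty\}$, condition on $\mathscr{F}_{T_z}$ and apply the strong Markov property: the conditional probability that the post-$T_z$ functional drives $Z$ down to $\le -z$ eventually equals the conditional probability that, starting the GOU afresh from $V_{T_z}$, its integral part ultimately reaches $\le -V_{T_z}$, which is $G(-V_{T_z})$ by the definition spelled out in the statement. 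Hence
\[
G(-z)=P(Z_\infty\le -z)=E\bigl(1_{\{T_z<\infty\}}\,G(-V_{T_z})\bigr)
=P(T_z<\infty)\,E\bigl(G(-V_{T_z})\mid T_z<\infty\bigr),
\]
and solving for $\psi(z)=P(T_z<\infty)$ gives the claimed formula, provided the denominator is nonzero — which it is whenever $P(T_z<\infty)>0$, since otherwise the formula reads $0/0$ and one interprets $\psi(z)=0$ directly (if $T_z=\infty$ a.s.\ then trivially $\psi(z)=0$ and $G(-z)=0$ by the argument above, consistent with the convention).

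The main obstacle, and the step I would spend the most care on, is the identification
\[
P\bigl(\text{post-}T_z\text{ integral eventually}\le -V_{T_z}\,\big|\,\mathscr{F}_{T_z}\bigr)=G(-V_{T_z})
\]
on $\{T_z<\infty\}$. Two subtleties arise: first, one must be sure that ``eventually reaching $\le -x$'' coincides with ``$Z_\infty\le -x$'' — i.e.\ that whenever the convergent integral path dips below $-x$ it does so in the limit as well (in ruin terms: once $V$ goes negative after $T_z$ we need $\liminf$ of the shifted process to stay $\le 0$, which is false pathwise but true in the sense that the event of ever going below equals, modulo $P$-null sets, the event that $Z_\infty<-x$; here continuity of $G$ from \cite{BertoinLindnerMaller07} and a careful fluctuation argument are needed). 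Second, measurability: $V_{T_z}$ is $\mathscr{F}_{T_z}$-measurable and the post-$T_z$ process is independent of it, so $G(-V_{T_z})$ is the right conditional expectation by the usual ``freezing'' lemma — this is routine once the first subtlety is resolved. I would handle the first subtlety by noting that if the shifted integral reaches a value $<-x$ at some finite time but has $Z_\infty>-x$, then by a time-reversal / duality or by the strong Markov property applied again at that first passage time, we would be in a situation that occurs with the same probability as starting even lower, and iterating shows the ``escape'' set and the ``$Z_\infty\le -x$'' set differ by a null set; alternatively, and more cleanly, replace $T_z$ by the first passage of $Z$ strictly below $-z$ and use that at ruin $V_{T_z}\le 0$ so $-V_{T_z}\ge 0$, making $G(-V_{T_z})=P(Z_\infty\le -V_{T_z})$ genuinely the probability of further descent, and the monotone structure of these nested events gives the identity.
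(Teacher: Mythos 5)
Your high-level strategy is the same as the paper's — partition $\{Z_\infty \le -z\}$ by $\{T_z<\infty\}$, use the strong Markov property at $T_z$, identify the conditional probability as $G(-V_{T_z})$, and solve the resulting identity for $\psi(z)$. However, the step you yourself flag as the ``main obstacle'' is a genuine gap, and the detours you propose (time-reversal/duality, fluctuation arguments, invoking continuity from Bertoin et al.) are not how it gets resolved; in fact they suggest you are computing the wrong quantity.

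The confusion is between a running-infimum event and a terminal-level event. You do not need ``the post-$T_z$ path eventually dips below $-V_{T_z}$''; you need the conditional law of the single random variable $Z_\infty$ given $\mathscr{F}_{T_z}$. The paper's device is to set
\[
U_t:=e^{\xi_t}(Z_\infty-Z_t)=e^{\xi_t}\int_{t+}^{\infty}e^{-\xi_{s-}}\ud\eta_s
=\int_{t+}^{\infty}e^{-(\xi_{s-}-\xi_t)}\ud\eta_s,
\]
which by the \Levy property is independent of $\mathscr{F}_t$ and satisfies $U_t=_D Z_\infty$; the same holds for $U_{T_z}$ conditionally on $\{T_z<\infty\}$ relative to $\mathscr{F}_{T_z}$. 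The algebraic identity
\[
z+Z_\infty = z+Z_{T_z}+e^{-\xi_{T_z}}U_{T_z}=e^{-\xi_{T_z}}\bigl(V_{T_z}+U_{T_z}\bigr)
\]
on $\{T_z<\infty\}$ then makes $\{z+Z_\infty<0\}=\{U_{T_z}<-V_{T_z}\}$, so
\[
P\bigl(z+Z_\infty<0\,\big|\,\mathscr{F}_{T_z}\bigr)(\omega)=P\bigl(Z_\infty<-V_{T_z}(\omega)\bigr)=G\bigl(-V_{T_z}(\omega)\bigr)
\]
for $T_z(\omega)<\infty$. No statement about paths dipping below levels is needed, so your worry about ``eventually reaching $\le -x$ versus $Z_\infty\le -x$'' simply does not arise with the right decomposition. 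Combined with $\{Z_\infty<-z\}\subset\{T_z<\infty\}$ (immediate from a.s.\ convergence, and cleaner than your boundary-atom workaround, since the paper works with strict inequalities), this yields $G(-z)=E\bigl(G(-V_{T_z})\,1_{\{T_z<\infty\}}\bigr)=E\bigl(G(-V_{T_z})\mid T_z<\infty\bigr)\psi(z)$, which is your final identity. In short: your conclusion and overall shape are right, but without the $U_t$ construction your pivotal conditional identification is unjustified, and the repair routes you sketch would not close the gap.
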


\begin{remark}\label{formula remarks}
\begin{enumerate}
{\rm\item In the case that $\xi$ and $\eta$ are independent, Paulsen
\cite{Paulsen98} shows, under a number of side conditions which
ensure that $Z_t$ converges a.s to a finite random variable
$Z_\infty$  with distribution function $H(z):=P(Z_\infty<z)$ as
$t\rightarrow\infty$, that
$$\psi(z)=\frac{H(-z)}{E\left(H(-V_{T_z})\big|T_z<\infty\right)}.$$
This formula is a modification of a result given by Harrison
\cite{Harrison77} for the special case in which $\xi$ is
deterministic drift and $\eta$ is a \Levy process with finite
variance. Theorem \ref{ruin formula theorem} extends the formula to
the general dependent case. Our proof is similar to those of Paulsen
and Harrison, however we write it out in full because some details
are different.

\item Erickson and Maller \cite{EricksonMaller05} prove that $Z_t$ converges a.s to
a finite random variable $Z_\infty$ as $t\rightarrow\infty$ if and
only if
$$\lim_{t\rightarrow\infty}\xi_t=+\infty~a.s~~\hbox{and}~~
\int_{\mathbb{R}\setminus[-e,e]}\left(\frac{\ln |y|}{A_\xi(\ln
|y|)}\right)\Pi_\eta(\ud y)<\infty,$$ where, for $x\ge1,$
$$A_\xi(x):=1+\int_1^x\Pi_\xi((z,\infty))\ud z.$$
Lindner and Maller \cite{LindnerMaller05} prove that if $V$ is not a
constant process, then $V$ is strictly stationary if and only if
$\int_0^\infty e^{\xi_{s-}}\ud L_s$ converges a.s to a finite random
variable as $t\rightarrow\infty$, where $L$ is the \Levy process
$$L_t:=\eta_t+\sum_{0<s\le t}\left( e^{-\Delta\xi_s}-1\right)\Delta\eta_s-t\hbox{Cov}(B_{\xi,1},B_{\eta,1}),~~~t\ge0.$$

In neither of these cases do the conditions of Theorem \ref{ruin
prob corollary} simplify. Each of the processes defined in
(\ref{continuous example}) and (\ref{jump example}) can belong to
either of these cases, or neither, depending on the choice of
constant $c$ and parameter $\lambda.$

\item Bertoin et al. \cite{BertoinLindnerMaller07} prove that if $Z_t$ converges a.s to
a finite random variable $Z_\infty$ as $t\rightarrow\infty$, then
$Z_\infty$ has an atom iff $Z_\infty$ is a constant value $k$ iff
$P\left(Z_t=k(1-e^{-\xi_t})~\forall t>0\right)=1$ iff
$e^{-\xi}=\epsilon(-\eta/k),$ where $\epsilon(\cdot)$ denotes the
stochastic exponential. In this case it is trivial that $\psi(z)=0$
for all $z\ge-k.$ Theorem \ref{ruin prob corollary} produces the
same result, however this will not become immediately clear until
Remark \ref{lower bound remarks} (2) following Theorem \ref{lower
bound theorem}. }
\end{enumerate}
\end{remark}

\section{Technical Results of Interest}

This section contains technical results needed in the proofs of
Theorems \ref{ruin prob corollary} and \ref{ruin theorem}, which
also have some independent interest. Recall that the stochastic, or
Dol\'{e}ans-Dade, exponential of a semimartingale $W_t$ is denoted
by $\epsilon(W)_t.$

\begin{proposition}\label{W conditions lemma}
Given a \Levy process $\xi$ with characteristic triplet
$(\gamma_\xi,\sigma_\xi,\Pi_\xi)$ there exists a \Levy process $W$
adapted to the same filtration, such that
$e^{-\xi_t}=\epsilon(W)_t$, where $(\xi,W)$ is the bivariate \Levy
process with characteristic triplet
$\left((\tilde{\gamma}_\xi,\tilde{\gamma}_W),\Sigma_{\xi,W},\Pi_{\xi,W}\right)$
defined as follows:
\begin{equation}\label{second covariance matrix equation}\Sigma_{\xi,W}= \left[
  \begin{array}{ r r }
     1 & -1 \\
     -1 & 1
  \end{array} \right]\sigma_\xi^2,
\end{equation}
the \Levy measure $\Pi_{\xi,W}$ is concentrated on
$\{(x,e^{-x}-1)~:~x\in\mathbb{R}\}$ so that
$$\Pi_W((-\infty,-1])=0$$ and
$$\Pi_W(\Lambda)=\Pi_\xi(-\ln(\Lambda+1))~~~~\textrm{when~}
\Lambda\subset(-1,\infty),$$ and
\begin{equation}\label{two dim equation}\tilde{\gamma}_\xi+\tilde{\gamma}_W=\frac{1}{2}\sigma_\xi^2+\int_{x^2+(e^{-x}-1)^2<1}(x+e^{-x}-1)\Pi_\xi(\ud x).\end{equation}
\end{proposition}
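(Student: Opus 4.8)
The plan is to construct $W$ explicitly by inverting the stochastic exponential relation $e^{-\xi_t} = \epsilon(W)_t$. Recall that the Dol\'eans-Dade exponential of a semimartingale $W$ satisfies the SDE $\epsilon(W)_t = 1 + \int_0^t \epsilon(W)_{s-}\,\ud W_s$, and conversely, since $e^{-\xi_t}$ is a strictly positive semimartingale with $e^{-\xi_0}=1$, we can \emph{define} $W$ by $W_t := \int_0^t e^{\xi_{s-}}\,\ud(e^{-\xi_s})$. One then checks via the associativity of the stochastic integral that $\epsilon(W)_t = e^{-\xi_t}$. The bulk of the work is to compute the characteristic triplet of the bivariate process $(\xi, W)$, which amounts to applying Ito's formula to $e^{-\xi_s}$ and reading off the drift, Gaussian, and jump parts of the resulting semimartingale.

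First I would apply Ito's formula to the function $f(x) = e^{-x}$ with $\xi$, using the \LevyIto decomposition. This gives
\[
e^{-\xi_t} = 1 - \int_0^t e^{-\xi_{s-}}\,\ud\xi_s + \tfrac{1}{2}\sigma_\xi^2\int_0^t e^{-\xi_{s-}}\,\ud s + \sum_{0<s\le t} e^{-\xi_{s-}}\bigl(e^{-\Delta\xi_s} - 1 + \Delta\xi_s\bigr),
\]
so that $\ud W_t = e^{\xi_{t-}}\,\ud(e^{-\xi_t}) = -\ud\xi_t + \tfrac12\sigma_\xi^2\,\ud t + \sum (\cdot)$, i.e. $\Delta W_t = e^{-\Delta\xi_t} - 1$ and the continuous martingale part of $W$ is $-B_\xi$. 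This immediately yields: (i) the Gaussian part of $W$ is $-B_\xi$, so $W$ has the same Gaussian variance $\sigma_\xi^2$ as $\xi$ but is perfectly negatively correlated with it, giving the covariance matrix in (\ref{second covariance matrix equation}); (ii) since $\Delta W_t = e^{-\Delta\xi_t}-1$ as a deterministic function of $\Delta\xi_t$, the jumps of $(\xi, W)$ lie on the curve $\{(x, e^{-x}-1): x\in\mathbb{R}\}$, so $\Pi_{\xi,W}$ is the pushforward of $\Pi_\xi$ under $x\mapsto(x, e^{-x}-1)$; in particular $\Pi_W$ is supported on $(-1,\infty)$ with $\Pi_W(\Lambda) = \Pi_\xi(-\ln(\Lambda+1))$ for $\Lambda\subset(-1,\infty)$, since the map $x\mapsto e^{-x}-1$ is a bijection from $\mathbb{R}$ onto $(-1,\infty)$ with inverse $y\mapsto -\ln(y+1)$.

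Next I would extract the drift identity (\ref{two dim equation}). The characteristic drift $\tilde{\gamma}_W$ of $W$ is $E\bigl(W_1 - \int_{|z|\ge1} z\,N_{W,1}(\cdot,\ud z)\bigr)$; taking expectations in the Ito expansion above (the $\xi$-martingale terms and compensated small-jump integrals vanish in expectation) and carefully handling the truncation regions, one obtains $\tilde\gamma_W$ in terms of $\tilde\gamma_\xi$, $\sigma_\xi^2$, and an integral against $\Pi_\xi$. The cleanest route is to note that both $\tilde\gamma_\xi + \tilde\gamma_W$ and the right-hand side of (\ref{two dim equation}) are the ``center'' of the process $\xi + W$ with respect to the standard truncation $\{x^2+(e^{-x}-1)^2<1\}$ on the joint jumps; since $W = -\xi + \tfrac12\sigma_\xi^2 t + (\text{pure-jump compensated drift correction})$, computing the drift of $\xi + W$ directly gives $\tfrac12\sigma_\xi^2 + \int (x + e^{-x}-1)\Pi_\xi(\ud x)$ over the appropriate region.

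The main obstacle is bookkeeping with the truncation functions. The \LevyIto decomposition of $\xi$ uses the indicator $1_{\{|x|<1\}}$, but the curve $x\mapsto e^{-x}-1$ distorts this region, so the joint process $(\xi,W)$ must be recentered from ``$\xi$-truncation'' to ``Euclidean truncation on $\mathbb{R}^2$'' via the compensation-region-change formula analogous to (\ref{2 dim to 1 dim equation}); one must verify that the relevant integrals converge (which follows from $\int \min\{x^2,1\}\Pi_\xi(\ud x)<\infty$ together with $e^{-x}-1 = -x + O(x^2)$ near $0$, so $x + (e^{-x}-1) = O(x^2)$ is integrable near the origin, and $|e^{-x}-1|\le 1$ plus boundedness away from $0$ handles the tails). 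I would also double-check that $\Pi_{\xi,W}$ satisfies $\int \min\{|z|^2,1\}\,\Pi_{\xi,W}(\ud z)<\infty$ so that it is a legitimate \Levy measure, again using $(x,e^{-x}-1) = (x,-x) + O(x^2)$ near the origin. Once the truncation accounting is done correctly, identities (\ref{second covariance matrix equation}) and (\ref{two dim equation}) and the description of $\Pi_{\xi,W}$ all drop out, and a final appeal to uniqueness of solutions of the exponential SDE confirms $e^{-\xi} = \epsilon(W)$.
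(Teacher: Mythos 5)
Your proposal is correct, but it routes through the problem slightly differently than the paper does. You \emph{define} $W$ as the stochastic logarithm $W_t = \int_0^t e^{\xi_{s-}}\,\ud(e^{-\xi_s})$ and then apply It\^o's formula to $e^{-\xi}$ to read off the jump, Gaussian and drift parts of $W$. The paper instead starts from Protter's explicit product representation $\epsilon(W)_t = e^{W_t-\frac12[W,W]_t^c}\prod_{0<s\le t}(1+\Delta W_s)e^{-\Delta W_s}$, takes logarithms of both sides of $e^{-\xi_t}=\epsilon(W)_t$ to get $-\xi_t = W_t - \tfrac12\sigma_W^2 t + \int_{(-1,\infty)}(\ln(1+x)-x)\,N_{W,t}(\cdot,\ud x)$, and identifies components from there. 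The two routes are essentially dual: both land on $\Delta W_t = e^{-\Delta\xi_t}-1$ and $B_W = -B_\xi$ by matching the jump and martingale parts, and both then reduce the drift identity (\ref{two dim equation}) to truncation-function accounting. Your approach is arguably more self-contained on the drift step (the paper simply defers it to Theorem~2.2(iv) of Bertoin, Lindner and Maller \cite{BertoinLindnerMaller07}), whereas the paper's log-of-product computation avoids having to justify the It\^o expansion of $e^{-\xi}$ for a general L\'evy process separately. One small thing you leave only sketched is the actual execution of the truncation-region change for (\ref{two dim equation}); since the paper also omits this calculation, this is not a gap relative to the paper, but if you wanted the argument to stand alone you would need to write it out (or cite the same reference the authors do).
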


We define the lower bound function $\delta$ for $V$ in (\ref{GOU
definition}) as
\[
\delta(z) = \inf \left\{u\in\mathbb{R}:P\left(\inf_{t\geq 0} V_t
\leq u \big|V_0=z\right) > 0 \right\}.
\] The following theorem exactly characterizes the lower bound function.

\begin{theorem}\label{lower bound theorem}
The lower bound function satisfies the following properties:
\begin{enumerate}
\item \label{p:decrease} For all $z\in\mathbb{R}$, $\delta(z)\leq z.$
\item \label{p:monotone}If $z_1<z_2$ then $\delta(z_1) \leq \delta(z_2).$
\item \label{p:subordinator} Let $W$ be the \Levy process such that $e^{-\xi_t}=\epsilon(W)_t$.
Then $\delta(z)=z$ if and only if $\eta-z W$ is a subordinator.
\item \label{p:fixed} For all $z\in\mathbb{R}$, $\delta(z)=\delta(\delta(z)),$ and
\[
\delta(z)=\sup \left\{u : u \leq z, \eta-u W \hbox{~is a
subordinator}\right\}.
\]
\end{enumerate}
\end{theorem}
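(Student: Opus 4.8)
The plan is to treat the four assertions in order, reducing everything to the auxiliary \Levy process $W$ of Proposition~\ref{W conditions lemma}. Since $e^{-\xi_t}=\epsilon(W)_t$, the stochastic exponential identity gives $\int_0^t e^{-\xi_{s-}}\ud W_s=e^{-\xi_t}-1$, and substituting this into (\ref{GOU definition}) shows that, for every $u\in\mathbb{R}$ and with $V^{(z)}$ denoting the GOU started at $z$,
\[
V^{(z)}_t-u\;=\;e^{\xi_t}\Big((z-u)+\int_0^t e^{-\xi_{s-}}\ud(\eta-uW)_s\Big).
\]
The increments of $W$ are determined by those of $\xi$, so $(\xi,\eta,W)$ is a \Levy process, $(\xi,\eta-uW)$ is a bivariate \Levy process, and its stochastic integral process is $Z^{(u)}_t:=\int_0^t e^{-\xi_{s-}}\ud(\eta-uW)_s=Z_t-u(e^{-\xi_t}-1)$; thus $t\mapsto V^{(z)}_t-u$ is exactly the GOU driven by $(\xi,\eta-uW)$ started at $z-u$. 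Property~\ref{p:decrease} is then immediate from $\inf_{t\ge0}V^{(z)}_t\le V^{(z)}_0=z$, and Property~\ref{p:monotone} from the pathwise comparison $V^{(z_2)}_t-V^{(z_1)}_t=e^{\xi_t}(z_2-z_1)\ge0$, which gives $\inf_tV^{(z_2)}_t\ge\inf_tV^{(z_1)}_t$ almost surely on a common probability space.

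Property~\ref{p:subordinator} I would deduce from Theorem~\ref{ruin theorem} via the observation $\{V^{(z)}_t\ge z\}=\{Z^{(z)}_t\ge0\}$. If $\eta-zW$ is a subordinator, then $Z^{(z)}$ is a pathwise Lebesgue--Stieltjes integral of the strictly positive integrand $e^{-\xi_{s-}}$ against a non-decreasing function, hence non-decreasing and started at $0$, so $V^{(z)}_t\ge z$ for all $t$ a.s.\ and, with Property~\ref{p:decrease}, $\delta(z)=z$. If $\eta-zW$ is not a subordinator, Theorem~\ref{ruin theorem} applied to $(\xi,\eta-zW)$ gives $P(Z^{(z)}_T<0)>0$ for any fixed $T>0$; since $\{\inf_tV^{(z)}_t<z\}=\{\inf_tZ^{(z)}_t<0\}\supseteq\{Z^{(z)}_T<0\}$, some level strictly below $z$ is reached with positive probability, so $\delta(z)<z$.

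For Property~\ref{p:fixed}, set $S_z:=\{u\le z:\eta-uW\text{ is a subordinator}\}$ and $s(z):=\sup S_z$ (with $\sup\emptyset=-\infty$). If $u\in S_z$ then $Z^{(u)}_t\ge0\ge u-z$ for all $t$ a.s.\ (as above), so $V^{(z)}_t\ge u$ and $\delta(z)\ge u$, whence $\delta(z)\ge s(z)$. For the reverse inequality it suffices, using Property~\ref{p:decrease} for $u\ge z$, to show $P(\inf_tV^{(z)}_t\le u)>0$ for each $u$ with $s(z)<u<z$; by the shift identity this follows once the GOU $\widehat V$ driven by $\widehat\eta:=\eta-uW$ (not a subordinator, since $u\le z$ but $u>s(z)$) and started at $y_0:=z-u>0$ satisfies $P(\inf_t\widehat V_t<0)>0$. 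Assume not: then $\widehat Z_t\ge-y_0$ for all $t$ a.s., where $\widehat Z$ is the stochastic integral process of $(\xi,\widehat\eta)$; let $L$ be the least $\ell\ge0$ with $\widehat Z_t\ge-\ell$ for all $t$ a.s., so $0\le L\le y_0$ and the essential infimum of $\inf_t\widehat Z_t$ equals $-L$. For fixed $T>0$, stationarity and independence of increments give $\widehat Z_{T+s}=\widehat Z_T+e^{-\xi_T}\zeta^{(T)}_s$ for all $s\ge0$, with $\zeta^{(T)}$ distributed as $\widehat Z$ and independent of $\mathscr{F}_T$; the bound $\widehat Z_{T+s}\ge-L$ forces $\inf_s\zeta^{(T)}_s\ge-e^{\xi_T}(L+\widehat Z_T)$ a.s., and since $\inf_s\zeta^{(T)}_s$ is independent of $(\xi_T,\widehat Z_T)$ with essential infimum $-L$, a Fubini argument yields $e^{\xi_T}(L+\widehat Z_T)\ge L$ a.s. Letting $T$ run over the rationals and invoking right-continuity, the GOU driven by $(\xi,\widehat\eta)$ started at $L$, namely $t\mapsto e^{\xi_t}(L+\widehat Z_t)$, stays $\ge L$, so $\delta_{\widehat\eta}(L)=L$ for the lower bound function $\delta_{\widehat\eta}$ of that GOU; by Property~\ref{p:subordinator} (with the same $W$, which depends only on $\xi$) the \Levy process $\widehat\eta-LW=\eta-(u+L)W$ is a subordinator. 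But $u+L\le u+y_0=z$, so $u+L\in S_z$ and hence $u+L\le s(z)<u$, contradicting $L\ge0$. This proves $\delta(z)=s(z)$.

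For the fixed-point relation, if $S_z=\emptyset$ then $\delta(z)=-\infty$ and the claim is trivial (with $\delta(-\infty):=-\infty$); otherwise $S_z$ is closed, because a \Levy process that is an a.s.\ pointwise limit of non-negative processes is non-negative, hence a subordinator. Then $\delta(z)=s(z)\in S_z$, so $\eta-\delta(z)W$ is a subordinator, giving $\delta(z)\in S_{\delta(z)}$ and therefore $\delta(\delta(z))=\sup S_{\delta(z)}=\delta(z)$. The main obstacle is the reverse inequality in Property~\ref{p:fixed}: Theorem~\ref{ruin theorem} only supplies $P(Z^{(u)}_T<0)>0$, and one must upgrade this to crossing a prescribed negative level. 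The bootstrap above does so by coupling the independent-increments decomposition of $\widehat Z$ with the extremal definition of $L$ and then re-applying Property~\ref{p:subordinator}; the delicate point is handling the discount factor $e^{-\xi_T}$, which if treated naively makes an iterative construction converge to a finite bound rather than to $-\infty$.
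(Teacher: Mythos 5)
Your proofs of Properties 1--3 follow the paper closely: the decomposition $V_t=u+e^{\xi_t}\bigl(z-u+\int_0^t e^{-\xi_{s-}}\,\ud(\eta_s-uW_s)\bigr)$, monotonicity in $z$, and the appeal to Theorem~\ref{ruin theorem} for $(\xi,\eta-zW)$ are exactly the paper's steps. For Property~4 you take a genuinely different and more elaborate route. The paper argues by contradiction that $\eta-\delta(z)W$ must itself be a subordinator: if not, Property~3 gives some $\epsilon>0$ with $\delta(\delta(z)+\epsilon)<\delta(z)$, and then a single application of the strong Markov property at the hitting time $T_{\delta(z)+\epsilon}$, together with monotonicity in the starting point, shows that starting from $z$ one drops below $\delta(z)$ with positive probability, contradicting the definition of $\delta(z)$; the sup formula and the fixed-point identity then fall out from Properties 1--3. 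You instead prove $\delta(z)\ge \sup S_z$ directly, prove $\delta(z)\le \sup S_z$ by contradiction through the extremal level $L$ (using stationarity and independence of increments at a \emph{fixed} time $T$, a Fubini argument comparing the conditional bound to the unconditional essential infimum $-L$, and then re-invoking Property~3 for the shifted GOU driven by $(\xi,\eta-uW)$ started at $L$), and finally obtain the fixed-point identity from a closedness argument for $S_z$. Both arguments are correct. What yours buys is an avoidance of the strong Markov property at a random hitting time in favour of renewal-at-fixed-times plus independence, and it makes explicit the role of the essential lower bound $L$ and the closedness of the set of subordinator levels; what the paper's buys is brevity, since it iterates the ``small drop'' from Property~3 once and contradicts the definition of $\delta(z)$ in three lines. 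You also correctly flagged the crux: Theorem~\ref{ruin theorem} only gives $P(Z_T<0)>0$, and the extra work in both proofs is precisely to upgrade a strictly negative value to crossing a prescribed level.

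One minor caveat: you invoke Theorem~\ref{ruin theorem} for the bivariate process $(\xi,\widehat\eta)$, which is legitimate because $W$, and hence $\widehat\eta=\eta-uW$, is adapted to the filtration of $\xi$ and $(\xi,\widehat\eta)$ is indeed a bivariate \Levy process; it is worth noting this explicitly, since the hypothesis in the paper's Theorem~\ref{ruin theorem} is stated for $(\xi,\eta)$ generating the filtration, but the argument there only uses the \Levy property. Also the edge case $S_z=\emptyset$ (i.e.\ $\delta(z)=-\infty$) is outside the literal domain $\mathbb{R}$ of $\delta$, so ``$\delta(\delta(z))=\delta(z)$'' is vacuous there, which you correctly treat as trivial.
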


\begin{remark}\label{lower bound remarks}
\begin{enumerate}
{\rm\item If $\eta$ is a subordinator then $\delta(0)=0,$ so $V$
cannot drop below zero when $V_0=z\ge0.$
\item As noted in Remark \ref{formula remarks} (3), if $Z_t$ converges a.s to
a finite random variable $Z_\infty$ as $t\rightarrow\infty$, then
$Z_\infty$ has an atom iff $e^{-\xi}=\epsilon(-\eta/k).$ If this
holds then  $\delta(-k)=-k,$ since $\eta+k\left(-\eta/k\right)=0$
and hence is a subordinator. Thus $\psi(z)=0$ for all $z\ge-k,$ as
mentioned in Remark \ref{formula remarks} (3). }
\end{enumerate}
\end{remark}

\begin{theorem}\label{sub conditions theorem} Let $u\in\mathbb{R}.$ With $W$ defined as in Proposition \ref{W conditions lemma}, the \Levy
process $\eta-uW$ is a subordinator if and only if the following
three conditions are satisfied: the Gaussian covariance matrix is of the
form \begin{equation}\label{covariance matrix
equation}\Sigma_{\xi,\eta}= \left[
  \begin{array}{ r r }
     1 & -u \\
     -u & u^2
  \end{array} \right]\sigma_\xi^2,\end{equation} at least one of the following is true:
\begin{itemize}
\item $\Pi_{\xi,\eta}(A_3)=0$ and $\theta_2\le\theta_4$ and $u\in[\theta_2,\theta_4];$
\item $\Pi_{\xi,\eta}(A_2)=0$ and $\theta_1\le\theta_3$ and $u\in[\theta_1,\theta_3];$
\item $\Pi_{\xi,\eta}(A_3)=\Pi_{\xi,\eta}(A_2)=0$ and $u\in[\theta_1,\theta_4];$
\end{itemize} and in addition, $u$ satisfies (\ref{finite drift equation}).
\end{theorem}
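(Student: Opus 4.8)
The plan is to invoke the classical criterion (recalled in Remark \ref{main remarks}(4), following Sato \cite{Sato99}) that a \Levy process $L$ is a subordinator if and only if it has no Gaussian part, no negative jumps, and nonnegative natural drift $d_L:=\gamma_L-\int_{(0,1)}r\,\Pi_L(\ud r)$, with the convention $d_L:=-\infty$ when the integral diverges; and to match these three requirements, in that order, with the three conditions of the theorem, applied to $L:=\eta-uW$. Note $\eta-uW$ is genuinely a \Levy process, since $W$, being built from $\xi$, forms a bivariate \Levy process jointly with $\eta$.

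\emph{Gaussian part.} By Proposition \ref{W conditions lemma} the covariance matrix (\ref{second covariance matrix equation}) forces the Brownian components of $\xi$ and $W$ to be perfectly negatively correlated, so $B_W=-B_\xi$ (trivially when $\sigma_\xi^2=0$). Hence the Gaussian part of $L$ is $B_\eta+uB_\xi$, with variance $\sigma_\eta^2+2u\rho+u^2\sigma_\xi^2$, where $\rho$ is the off-diagonal entry of $\Sigma_{\xi,\eta}$; using $\sigma_\xi^2\sigma_\eta^2-\rho^2\ge0$, this vanishes iff $\rho=-u\sigma_\xi^2$ and $\sigma_\eta^2=u^2\sigma_\xi^2$, i.e. iff $\Sigma_{\xi,\eta}$ has the form (\ref{covariance matrix equation}). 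This is the first condition.

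\emph{Negative jumps.} Again by Proposition \ref{W conditions lemma}, $\Pi_{\xi,W}$ sits on $\{(x,e^{-x}-1)\}$, so $\Delta W_t=e^{-\Delta\xi_t}-1$ and $\Pi_L$ is the image of $\Pi_{\xi,\eta}$ under $(x,y)\mapsto y-u(e^{-x}-1)$. Thus $L$ has no negative jumps iff $\Pi_{\xi,\eta}(\{y-u(e^{-x}-1)<0\})=0$, i.e., since that set equals $\bigcup_{i=1}^{4}A_i^u$, iff $\Pi_{\xi,\eta}(A_i^u)=0$ for all $i$. I would split on the sign of $u$ and use the monotonicity of the $A_i^u$ together with $\Pi_{\xi,\eta}(A_i^{\theta_i})=0$: for $u>0$ one has $A_1^u=\emptyset$, $A_3^u=A_3$, while $A_2^u$ shrinks and $A_4^u$ grows in $u$, so the condition reads $\Pi_{\xi,\eta}(A_3)=0$ and $\theta_2\le u\le\theta_4$ --- the first bullet; for $u<0$ the mirror argument gives $\Pi_{\xi,\eta}(A_2)=0$ and $\theta_1\le u\le\theta_3$ --- the second bullet; and $u=0$ forces $\Pi_{\xi,\eta}(A_2)=\Pi_{\xi,\eta}(A_3)=0$ --- the third bullet. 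Hence absence of negative jumps is equivalent to at least one of the three bullets holding.

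\emph{Drift.} Granting the first two conditions, $L$ has no Gaussian part and no negative jumps, so it is a subordinator iff $d_L\ge0$, and it remains to identify $d_L$ with the left-hand side of (\ref{finite drift equation}). I would compute the characteristic triplet of $L=\eta-uW$ from the joint triplet of $(\xi,\eta)$ and Proposition \ref{W conditions lemma}: the truncated drift $\gamma_L$ collects $\tilde\gamma_\eta-u\tilde\gamma_W$, and (\ref{two dim equation}) rewrites $-u\tilde\gamma_W$ as $u\tilde\gamma_\xi-\tfrac12 u\sigma_\xi^2-u\int_{x^2+(e^{-x}-1)^2<1}(x+e^{-x}-1)\Pi_\xi(\ud x)$; passing from $\gamma_L$ to $d_L=\gamma_L-\int_{(0,1)}r\,\Pi_L(\ud r)$ adds $-\int_{\{0<y-u(e^{-x}-1)<1\}}(y-u(e^{-x}-1))\,\Pi_{\xi,\eta}(\ud(x,y))$; and the bookkeeping integrals collapse --- using $e^{-x}-1+x=O(x^2)$ near the origin, hence $\Pi_{\xi,\eta}$-integrable on the small-jump region, to reconcile the three truncation domains ($x^2+y^2<1$, $x^2+(e^{-x}-1)^2<1$, $|y-u(e^{-x}-1)|<1$) --- leaving exactly
\[
d_L=\tilde\gamma_\eta+u\tilde\gamma_\xi-\tfrac12 u\sigma_\xi^2-\int_{\{y-u(e^{-x}-1)>0\}\cap\{x^2+y^2<1\}}(ux+y)\,\Pi_{\xi,\eta}(\ud(x,y)).
\]
When the small-jump integral in (\ref{finite variation equation}) diverges, $L$ has unbounded variation and so is not a subordinator, while the displayed expression equals $-\infty$ (its finite part is $\tilde\gamma_\eta+u\tilde\gamma_\xi-\tfrac12 u\sigma_\xi^2$ and the integral is $+\infty$, again by the $O(x^2)$ comparison), so (\ref{finite drift equation}) also fails; the equivalence survives this degenerate case. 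Combining the three steps with the subordinator criterion yields the claim in both directions.

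\emph{Main obstacle.} The real labour is the drift computation: carefully propagating drift corrections as one passes between the Euclidean-ball truncation of $\Pi_{\xi,\eta}$, the truncation implicit in $W$ via Proposition \ref{W conditions lemma}, and the one-dimensional truncation for $\eta-uW$, and checking that every adjustment integral converges (from $\int\min\{|z|^2,1\}\,\Pi_{\xi,\eta}(\ud z)<\infty$ and the quadratic vanishing of $e^{-x}-1+x$ at $0$). A lesser nuisance is the boundary-case analysis in the negative-jumps step when $\Pi_{\xi,\eta}$ charges the coordinate axes, which controls whether the suprema and infima defining the $\theta_i$ are attained and how $u=0$ is accommodated by the three bullets.
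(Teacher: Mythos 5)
Your proposal is correct and follows essentially the same route as the paper's proof: apply the standard three-part characterisation of a subordinator (no Gaussian part, no negative jumps, nonnegative natural drift) to $\eta-uW$, match the Gaussian condition to the covariance matrix form via $B_W=-B_\xi$, match the no-negative-jumps condition to the $A_i^u$ bullets by a case split on the sign of $u$, and carry out the same drift-reconciliation computation (hinging on the $\Pi_{\xi,\eta}$-integrability of $e^{-x}-1+x=O(x^2)$) to identify $d_{\eta-uW}$ with the left side of (\ref{finite drift equation}).
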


\begin{remark}\label{infinite variation remark} {\rm In Remark \ref{main remarks} (4)
we stated three necessary and sufficient conditions for a \Levy
process to be a subordinator. These three conditions correspond
respectively with the three conditions in Theorem \ref{sub
conditions theorem}, as we shall see in the proof. In particular,
(\ref{finite drift equation}) is equivalent to the condition
$d_{\eta-uW}\ge0$. As noted in Remark \ref{main remarks} (4), if the
first two conditions of Theorem \ref{sub conditions theorem} hold,
then $d_{\eta-uW}\in[-\infty,\infty),$ thus ensuring that
(\ref{finite drift equation}) is well defined. Further, if all three
conditions hold, then $\int_{(0,1)}z\Pi_{\eta-uW}(\ud z)<\infty,$
which we will show to be equivalent to (\ref{finite variation
equation}). Note that if $\eta-uW$ has no Brownian component, no
negative jumps, but $\int_{(0,1)}z\Pi_{\eta-uW}(\ud z)=\infty,$
then, somewhat suprisingly, $\eta-uW$ is fluctuating and hence not a
subordinator, regardless of the value of the shift constant
$\gamma_{\eta-uW}.$ This behaviour occurs since
$d_{\eta-uW}=-\infty,$ and is explained in Sato \cite{Sato99}, p138.
}
\end{remark}

\section{Proofs}

We begin by proving Theorem \ref{ruin theorem}. For this proof, some
lemmas are required. In these we assume that $X=(\xi,\eta)$
has bounded jumps so that $X$ has finite absolute moments of all
orders. Then, to prove Theorem \ref{ruin theorem} we reduce to this case.

\begin{lemma}\label{martingale lemma} Suppose $X=(\xi,\eta)$ has bounded jumps and $E(\eta_1)=0.$ If we let $T>0$ be a fixed time then $Z^T$ is a mean-zero martingale with respect to $\mathbb{F}.$
\end{lemma}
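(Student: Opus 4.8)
The plan is to show that $Z^T$ is a square-integrable $\mathbb{F}$-martingale started at $Z_0=0$; the mean-zero property is then immediate. First I would record the consequences of the bounded-jumps hypothesis: since $X=(\xi,\eta)$ has bounded jumps, so does each marginal, and in particular $\Pi_\xi$ has bounded support, so $\xi$ has an everywhere-finite moment generating function (Sato \cite{Sato99}, Theorem 25.3). Write $\phi(\theta):=\log E(e^{\theta\xi_1})\in\mathbb{R}$ for all $\theta\in\mathbb{R}$; by stationarity and independence of the increments, $E(e^{\theta\xi_t})=e^{t\phi(\theta)}$. Also, $E(\eta_1)=0$ together with $E(\eta_1^2)<\infty$ (again from bounded jumps) makes $\eta$ itself a square-integrable $\mathbb{F}$-martingale with predictable quadratic variation $\langle\eta\rangle_t=\kappa t$, where $\kappa:=E(\eta_1^2)=\sigma_\eta^2+\int_{\mathbb{R}}y^2\,\Pi_\eta(\ud y)<\infty$.

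The crucial estimate is finiteness of the negative exponential moments of $\xi$ over $[0,T]$. Since a \Levy process has no fixed discontinuities, $\xi_{s-}=\xi_s$ a.s.\ for each fixed $s$, so by Tonelli
\[
E\!\left(\int_0^T e^{-2\xi_{s-}}\,\ud s\right)=\int_0^T E(e^{-2\xi_s})\,\ud s=\int_0^T e^{s\phi(-2)}\,\ud s<\infty .
\]
Now $H_s:=e^{-\xi_{s-}}$ is left-continuous and adapted, hence predictable and locally bounded, so $Z_t=\int_0^t H_s\,\ud\eta_s$ is a well-defined semimartingale and, $\eta$ being a local martingale, $Z$ is a local martingale (Protter \cite{Protter04}). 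Restricting to $[0,T]$, the displayed bound gives $E\!\int_0^T H_s^2\,\ud\langle\eta\rangle_s=\kappa\int_0^T e^{s\phi(-2)}\,\ud s<\infty$, and by the $L^2$-theory of stochastic integration against a square-integrable martingale (Protter \cite{Protter04}, Chapter IV) this forces $Z^T$ to be a true square-integrable martingale, with $E\big((Z^T_t)^2\big)=\kappa\int_0^{t\wedge T}e^{s\phi(-2)}\,\ud s$. As $Z^T_0=0$, it is mean-zero, which is the claim.

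The only real obstacle is the upgrade from local martingale to genuine ($L^2$) martingale, and this hinges entirely on the finiteness of $\phi(-2)=\log E(e^{-2\xi_1})$ --- precisely the place where boundedness of the jumps of $X$ enters. If one prefers to avoid the stochastic-integral isometry, an equivalent route is to apply Doob's $L^2$-maximal inequality to the exponential martingale $e^{-\xi_t-t\phi(-1)}$ to obtain $E\big(\sup_{0\le s\le T}e^{-2\xi_s}\big)\le 4\,e^{2T|\phi(-1)|}\,e^{T\phi(-2)}<\infty$, dominate the integrand by a single integrable random variable, and pass to the limit along a localizing sequence by dominated convergence; everything else is routine semimartingale bookkeeping.
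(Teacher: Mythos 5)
Your main argument is correct, but it takes a genuinely different route from the paper's. Both proofs begin identically: $e^{-\xi_{s-}}$ is predictable and locally bounded, $\eta$ is a martingale, so $Z$ is a local martingale; and both close the gap to a true martingale by exploiting the exponential moments of $\xi$ that bounded jumps guarantee (Sato, Theorem 25.3). Where you diverge is the mechanism for upgrading. The paper works in $L^1$: it invokes the criterion from Protter, p.~38, that a local martingale $M$ with $E\bigl(\sup_{s\le T}|M_s|\bigr)<\infty$ is a genuine martingale, and establishes that bound by a chain of Burkholder--Davis--Gundy, Cauchy--Schwarz, and Doob's maximal inequality applied to the normalized exponential martingale $e^{-\xi_t}/E(e^{-\xi_t})$. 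You instead work in $L^2$: you verify the It\^{o}-isometry condition $E\int_0^T e^{-2\xi_{s-}}\,\ud\langle\eta\rangle_s = \kappa\int_0^T e^{s\phi(-2)}\,\ud s<\infty$ by Tonelli together with $\phi(-2)<\infty$, and then cite the $L^2$-theory of stochastic integration against a square-integrable martingale to conclude $Z^T$ is a square-integrable martingale with explicit second moment. This is cleaner --- it bypasses BDG altogether, replaces the three-inequality chain with a single Fubini computation, and delivers the stronger $L^2$ conclusion rather than just $L^1$ domination. (One cosmetic remark: your identity $\langle\eta\rangle_t=\kappa t$ implicitly uses that $E\int H^2\,\ud[\eta,\eta]=E\int H^2\,\ud\langle\eta\rangle$ for predictable nonnegative $H$, which is fine but worth noting since Protter's isometry is usually stated with the optional bracket $[\eta,\eta]$.) Your ``alternative route'' sketched at the end is essentially the paper's own method --- Doob's $L^2$-maximal inequality on the exponential martingale --- though as written it is under-specified: dominating the integrand by $\sup_{s\le T}e^{-\xi_s}$ does not by itself control $\sup_{s\le T}|Z_s|$ when $\eta$ has infinite variation, and the paper still needs BDG at that point, which your sketch elides.
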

\begin{proof} Since $\eta$ is a \Levy process the assumption $E(\eta_1)=0$ implies that $\eta$ is a \cadlag martingale. Since $\xi$ is \cadlag, $e^{-\xi}$ is
a locally bounded process and hence $Z$ is a local martingale for
$\mathbb{F}$ by Protter \cite{Protter04}, p.171. If we show that
$E\left(\sup_{s\le t}|Z_s^T|\right)<\infty$ for every $t\ge 0$ then
Protter \cite{Protter04}, p.38 implies that $Z^T$ is a martingale.
This is equivalent to showing $E\left(\sup_{t\le
T}|Z_t|\right)<\infty.$ Since $Z$ is a local martingale and $Z_0=0,$
the Burkholder-Davis-Gundy inequalities in Lipster and Shiryaev
\cite{LipsterShiryayev89}, p.70 and p.75, ensure the existence of
$b>0$ such that
\begin{eqnarray*}E\left(\sup_{0\le t\le T}\left|\int_0^t
e^{-\xi_{s-}}\ud\eta_s\right|\right)
&\le& bE\left(\left[\int_0^\bullet e^{-\xi_{s-}}\ud\eta_s,\int_0^\bullet e^{-\xi_{s-}}\ud\eta_s\right]_T^{1/2}\right)\\
&=& bE\left(\left(\int_0^T e^{-2\xi_{s-}}\ud
[\eta,\eta]_s\right)^{1/2}\right)\\
&\le&bE\left(\left(\int_0^T \sup_{0\le t\le T}e^{-2\xi_t}\ud
[\eta,\eta]_s\right)^{1/2} \right)\\
&=&bE\left(\sup_{0\le t\le T}e^{-\xi_{t}}[\eta,\eta]_T^{1/2}\right)\\
&\le&b\left(E\left(\sup_{0\le t\le
T}e^{-2\xi_{t}}\right)\right)^{1/2}\left(E\left([\eta,\eta]_T\right)\right)^{1/2},
\end{eqnarray*}
where the second inequality follows from the fact that
$[\eta,\eta]_s$ is increasing and the final inequality follows by
the Cauchy-Schwarz inequality. (The notation $[\cdot,\cdot]$ denotes the quadratic variation process.) Now
\[
E\left([\eta,\eta]_T\right) = \sigma_\eta^2 T + E\left(\sum_{0\leq s
\leq T} (\Delta \eta)^2\right) = \sigma_\eta^2 T + T \int x^2
\Pi_\eta(\ud x),
\]
which is finite since $\eta$ has bounded jumps. Thus it suffices to
prove $E\left(\sup_{0\le t\le T}e^{-2\xi_{t}}\right)<\infty$.
Setting $Y_t =e^{-\xi_t}/E(e^{-\xi_t})$, a non-negative martingale,
it follows by Doob's maximal inequality, as expressed in Shiryaev
\cite{EncyclopaediaofmathematicsV690}, p.765, that
\begin{eqnarray*}E\left(\sup_{0\le t\le
T}\frac{e^{-2\xi_t}}{\left(E(e^{-\xi_t})\right)^2} \right) &\le&4\frac{E\left(e^{-2\xi_T}\right)}{\left(E(e^{-\xi_T}) \right)^2},\\
\end{eqnarray*}
which is finite since $\xi$ has bounded jumps and hence has finite
exponential moments of all orders (Sato \cite{Sato99}, p.161). It is
shown in Sato \cite{Sato99}, p.165, that
$\left(E(e^{-\xi_t})\right)^2=\left(E(e^{-\xi_1}) \right)^{2t}.$
Letting $c:=\left(E(e^{-\xi_1}) \right)^2\in(0,\infty),$ the above
inequality implies that
$$E\left(\sup_{0\le t\le T}e^{-2\xi_{t}}\right)\le\max\{1,c^T\}E\left(\sup_{0\le t\le T}\frac{e^{-2\xi_t}}{c^t}
\right)<\infty.$$\end{proof}

We now present two lemmas dealing with absolute continuity of
measures. These lemmas will be used to construct a new process $W$
such that $W^T$ is a mean-zero martingale which is mutually
absolutely continuous with $Z^T.$ Then  $P(Z_T<0)>0$ if and only if
$P(W_T<0)>0,$ and the latter statement will follow immediately from
the fact that $W^T$ is a mean-zero martingale.

\begin{lemma}\label{measure equivalence lemma}
Let $X:=(\xi,\eta)$ and $Y:=(\tau,\nu)$ be bivariate \Levy processes
adapted to $(\Omega,\mathscr{F},\mathbb{F},P)$, and let
$Z_t:=\int_0^t e^{-\xi_{s-}}\ud \eta_s$ and $W_t:=\int_0^t
e^{-\tau_{s-}}\ud \nu_s.$ If the induced probability measures of
$X^T$ and $Y^T$ are mutually absolutely continuous, then the induced
probability measures of $Z^T$ and $W^T$ are mutually absolutely
continuous.
\end{lemma}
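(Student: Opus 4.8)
The plan is to realise both $Z^T$ and $W^T$ as the image, under one and the same Borel-measurable functional $\Psi$ on the path space of $X^T$ and $Y^T$, of $X^T$ and $Y^T$ respectively: $Z^T=\Psi(X^T)$ $P$-a.s.\ and $W^T=\Psi(Y^T)$ $P$-a.s. Granting this, the conclusion is immediate, since then $\mathrm{Law}(Z^T)=\Psi_\ast\mathrm{Law}(X^T)$ and $\mathrm{Law}(W^T)=\Psi_\ast\mathrm{Law}(Y^T)$, and pushing two mutually absolutely continuous measures forward under a fixed measurable map yields two mutually absolutely continuous measures. The one delicate point is that the stochastic integral defining $Z$ is a priori specified only up to a $P$-null set that can depend on $\mathrm{Law}(X^T)$; the hypothesis of mutual absolute continuity is precisely what is needed to extract a version of the ``integral functional'' that is good for both laws at once, and this is where the work lies.

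To construct $\Psi$, recall that a \Levy process is a semimartingale, so $\eta^T$ is a semimartingale and $s\mapsto e^{-\xi_{s-}}$ is an adapted, locally bounded, left-continuous-with-right-limits integrand; hence $Z^T=\big(\int_0^{\cdot}e^{-\xi_{s-}}\ud\eta_s\big)^T=\int_0^{\cdot}e^{-\xi_{s-}}\ud\eta^T_s$ is an element of $D([0,T],\mathbb{R})$. Fix deterministic partitions $0=t^n_0<t^n_1<\dots<t^n_{k_n}=T$ with mesh tending to $0$, and for each $n$ define the map
$$\Phi_n:D([0,T],\mathbb{R}^2)\to D([0,T],\mathbb{R}),\qquad \Phi_n(a,b)_t:=\sum_{i}e^{-a(t^n_i)}\big(b(t^n_{i+1}\wedge t)-b(t^n_i\wedge t)\big),$$
which is Borel measurable because it is assembled from coordinate evaluations and continuous operations. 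By the Riemann-sum convergence theorem for stochastic integrals (Protter \cite{Protter04}, Ch.\ II), $\Phi_n(X^T)\to Z^T$ uniformly on $[0,T]$ in $P$-probability; and since the partitions are fixed and $Y$ is likewise a \Levy process, the very same maps give $\Phi_n(Y^T)\to W^T$ uniformly on $[0,T]$ in $P$-probability.

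Now pass to a subsequence $(n_k)$ along which $\Phi_{n_k}(X^T)\to Z^T$ $P$-almost surely, and set
$$C:=\big\{\omega'\in D([0,T],\mathbb{R}^2):\ (\Phi_{n_k}(\omega'))_k\ \text{converges in the Skorokhod topology}\big\},$$
a Borel set with $\mathrm{Law}(X^T)(C)=P(X^T\in C)=1$ (a uniform limit is in particular a Skorokhod limit). Here the hypothesis enters: because $\mathrm{Law}(Y^T)\ll\mathrm{Law}(X^T)$ we also have $\mathrm{Law}(Y^T)(C)=1$, so $\Phi_{n_k}(Y^T)$ converges $P$-a.s.; comparing with $\Phi_{n_k}(Y^T)\to W^T$ in $P$-probability forces this $P$-a.s.\ limit to equal $W^T$. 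Defining $\Psi(\omega'):=\lim_k\Phi_{n_k}(\omega')$ on $C$ and $\Psi:=0$ off $C$ produces a Borel-measurable $\Psi$ with $\Psi(X^T)=Z^T$ $P$-a.s.\ and $\Psi(Y^T)=W^T$ $P$-a.s. Finally, for a Borel set $B\subset D([0,T],\mathbb{R})$ with $\mathrm{Law}(W^T)(B)=0$, i.e.\ $\mathrm{Law}(Y^T)(\Psi^{-1}B)=0$, mutual absolute continuity gives $\mathrm{Law}(X^T)(\Psi^{-1}B)=0$, i.e.\ $\mathrm{Law}(Z^T)(B)=0$; the reverse implication is identical, so $\mathrm{Law}(Z^T)$ and $\mathrm{Law}(W^T)$ are mutually absolutely continuous.

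The substantive obstacle is the one flagged above: securing a single measurable functional that simultaneously represents $Z^T$ and $W^T$, rather than two a priori different $P$-a.s.\ equivalence classes. The Riemann-sum approximation resolves it because each $\Phi_n$ is a genuinely pathwise, measure-independent object, while absolute continuity transfers the full-measure convergence set $C$ from one law to the other. The remaining ingredients---measurability of the $\Phi_n$ and of $\Psi$ on Skorokhod space, the identification of stopped integrals, and the fact that an almost-sure limit that also converges in probability to a given limit agrees with it---are routine, and I would not dwell on them.
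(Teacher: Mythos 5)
Your proof is correct, but it takes a genuinely different route from the paper's. The paper works on the canonical path space $\bigl(D([0,T]\to\mathbb{R}^2),\mathscr{B}^{2[0,T]}\bigr)$ with coordinate process $C=(C',C'')$, defines a single stochastic integral formula $\int_0^\cdot e^{-C'_{s-}}\,\ud C''_s$ under each of the two laws $P_{X^T}$ and $P_{Y^T}$, identifies $Z^T$ and $W^T$ as the pullbacks of these along $X^T$ and $Y^T$, and then invokes Protter (\cite{Protter04}, p.60) for the fact that the stochastic integral is invariant, up to indistinguishability, under a mutually absolutely continuous change of measure. You instead manufacture an explicit single Borel functional $\Psi$ via Riemann sums $\Phi_n$ and transfer the full-measure Skorokhod-convergence set $C$ from $\mathrm{Law}(X^T)$ to $\mathrm{Law}(Y^T)$ by absolute continuity. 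In effect you re-derive, by hand and from first principles, the measure-invariance of the stochastic integral that the paper cites, together with the pathwise-functional representation that the paper's step ``$Z = X\circ Z'$'' tacitly uses. What your route buys is transparency and self-containment: it makes explicit why a single measurable functional exists simultaneously representing both integrals, which the paper leaves implicit behind the citation. What the paper's route buys is brevity and the avoidance of the Riemann-sum and Skorokhod-topology bookkeeping (Borel measurability of $\Phi_n$ and of $\Psi$, extraction of an a.s.\ subsequence, uniqueness of limits in probability), none of which is difficult but all of which takes space. Both are valid.
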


\begin{proof}
Let $D([0,T]\rightarrow\mathbb{R}^2)$ denote the set of \cadlag
functions from $[0,T]$ to $\mathbb{R}^2$ and $\mathscr{B}^{2[0,T]}$
denote the $\sigma$-algebra generated in this set by the Borel
cylinder sets (see Kallenberg \cite{Kallenberg97}). Then the induced
probability measures of $X^T$ and $Y^T$ can be written as $P_{X^T}$
and $P_{Y^T}$ on the measure space
$\left(D([0,T]\rightarrow\mathbb{R}^2),\mathscr{B}^{2[0,T]}\right).$
Let $C:=(C',C'')$ be the co-ordinate mapping of
$\left(D([0,T]\rightarrow\mathbb{R}^2),\mathscr{B}^{2[0,T]}\right)$
to itself. Define the process $Z'$ on the probability space
$\left(D([0,T]\rightarrow\mathbb{R}^2),\mathscr{B}^{2[0,T]},P_{X^T}\right)$
by $Z'_t:=\int_0^t e^{-C'_{s-}}\ud C''_s.$ Define $W'$ on
$\left(D([0,T]\rightarrow\mathbb{R}^2),\mathscr{B}^{2[0,T]},P_{Y^T}\right)$
by  $W'_t:=\int_0^t e^{-C'_{s-}}\ud C''_s.$ Note that $Z'$ and $W'$
are different processes since they are being evaluated under
different measures. Now $Z=X\circ Z'$ and $W=Y\circ W'$. Hence
$P(Z^T\in \Lambda)=P_{X^T}(Z'\in \Lambda)$ and $P(W^T\in
\Lambda)=P_{Y^T}(W'\in \Lambda).$ Since $P_{X^T}$ and $P_{Y^T}$ are
mutually absolutely continuous, Protter \cite{Protter04}, p.60
implies that $Z'$ and $W'$ are $P_{X^T}$-indistinguishable, and
$P_{Y^T}$-indistinguishable. So $P_{X^T}(Z'\in
\Lambda)=P_{X^T}(W'\in \Lambda).$ Since $P_{X^T}$ and $P_{Y^T}$ are
mutually absolutely continuous $P_{X^T}(W'\in \Lambda)=0$ iff
$P_{Y^T}(W'\in \Lambda)=0$ which proves $P(Z^T\in\Lambda)=0$ iff
$P(W^T\in\Lambda)=0,$ as required.\end{proof}

\begin{lemma}\label{existence of Y lemma}
If $X:=(\xi,\eta)$ has bounded jumps, $E(\eta_1)\ge 0,$ $\eta$ is
not a subordinator, and $\eta$ is not pure deterministic drift, then
there exists a bivariate \Levy process $Y:=(\tau,\nu)$ with bounded
jumps, adapted to $(\Omega,\mathscr{F},\mathbb{F},P),$ such that
$X^T$ and $Y^T$ are mutually absolutely continuous for all $T>0$,
and $E(\nu_1)=0.$
\end{lemma}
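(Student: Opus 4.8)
The plan is to produce $Y=(\tau,\nu)$ from $X=(\xi,\eta)$ by an Esscher-type (exponential tilting) change of measure combined with, if necessary, a deterministic shift of the drift of $\nu$, chosen so that the resulting $\nu$ has mean zero while the law of $Y^T$ stays equivalent to that of $X^T$. Recall the classical fact (Sato \cite{Sato99}, Theorem 33.1, or Kunita; see also Protter \cite{Protter04}) that for a \Levy process with triplet $(\gamma,\Sigma,\Pi)$, changing the measure on $\mathscr{F}_T$ by a density of the form $\exp\{\langle\beta,X_T\rangle - t\kappa(\beta)\}$ (an Esscher transform by $\beta\in\mathbb{R}^2$, valid here because $X$ has bounded jumps and hence finite exponential moments of all orders, Sato \cite{Sato99}, p.161) yields again a \Levy process, with the \emph{same} Gaussian part and a \Levy measure $e^{\langle\beta,z\rangle}\Pi(\ud z)$ which is mutually absolutely continuous with $\Pi$, the drift shifting in a controlled way; and the two laws restricted to $\mathscr{F}_T$ are mutually absolutely continuous for every $T$. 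So any such tilt automatically gives the required equivalence of $X^T$ and $Y^T$ (the processes live on the same space, adapted to the same filtration). Adding a deterministic drift $c\,t$ to the second coordinate also preserves equivalence of the laws on $\mathscr{F}_T$ only in the presence of a Gaussian component; in general a pure drift change is \emph{not} absolutely continuous, so I must be careful to realise the mean-zero adjustment \emph{through} the tilt, not by translating the drift.

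The key steps, in order. First, reduce to showing that the map $\beta\mapsto E_{P_\beta}(\nu_1)$, where under $P_\beta$ the process has triplet obtained by Esscher-tilting $X$ by $\beta$, takes the value $0$ for some admissible $\beta$; here I would tilt only in the $\xi$-direction, i.e. take $\beta=(\theta,0)$, so that $\nu=\eta$ as a process but its law changes. Under $P_{(\theta,0)}$ the \Levy measure of $(\xi,\eta)$ becomes $e^{\theta x}\Pi_{\xi,\eta}(\ud(x,y))$, the Gaussian matrix is unchanged, and the new mean of $\eta_1$ is a smooth function $m(\theta):=E_{P_{(\theta,0)}}(\eta_1)$, computable by differentiating the (finite) bivariate Laplace exponent; explicitly $m(\theta)=\tilde\gamma_\eta+\theta\,\mathrm{Cov}(B_{\xi,1},B_{\eta,1})+\int y\,e^{\theta x}\,\Pi_{\xi,\eta}(\ud(x,y))$ minus the usual truncation correction, and $m$ is real-analytic on the interior of its domain. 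Second, show $m$ is not identically equal to its value at $0$ and, more precisely, that it must cross $0$: we are given $m(0)=E(\eta_1)\ge 0$, so if $m(0)=0$ we are done with $\theta=0$; otherwise $m(0)>0$ and I need some $\theta$ with $m(\theta)<0$. Third — and this is where the hypotheses ``$\eta$ is not a subordinator'' and ``$\eta$ is not pure deterministic drift'' are used — argue that $m$ genuinely decreases somewhere: if $\eta$ has a Gaussian component, $m'(0)=\mathrm{Cov}(B_{\xi,1},B_{\eta,1})$, and tilting can be done in either coordinate or a combination to force the mean down (one can also tilt in the $\eta$-direction by $(\,0,-\phi)$, which leaves $\xi$'s marginal law changed but is still fine, and which pushes $E(\eta_1)$ strictly down as $\phi$ increases whenever $\eta$ is non-degenerate); if $\eta$ is pure jump, the presence of a negative jump (guaranteed unless $\eta$ is a subordinator or deterministic drift, by the subordinator characterisation recalled in Remark \ref{main remarks} (4)) means the integral term $\int y\,e^{\theta x}\Pi(\ud(x,y))$ can be driven to $-\infty$ along a suitable ray of $\beta$'s, so by the intermediate value theorem (continuity of $m$) there is an admissible $\beta$ with $m(\beta)=0$. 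Fourth, set $Y:=(\xi,\eta)$ viewed under this $P_\beta$ — equivalently, keep $P$ and let $Y$ be the process whose law is $P_\beta$, which exists on the same space by the density construction — note $Y$ still has bounded jumps (the tilt does not enlarge the support of the \Levy measure), and conclude $E(\nu_1)=0$ with $X^T\sim Y^T$ mutually absolutely continuous for all $T$.

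The main obstacle is the third step: verifying that the mean functional $m$ actually attains the value $0$, i.e. ruling out the possibility that $E(\eta_1)>0$ yet $m(\beta)\ge 0$ for \emph{every} admissible tilt $\beta$. This requires a clean case analysis showing that non-degeneracy of $\eta$ (not a subordinator, not pure drift) forces either a nonzero Gaussian covariance that one can exploit, or a negative-jump part of $\Pi_\eta$ whose contribution $\int_{y<0} y\,e^{\langle\beta,z\rangle}\Pi_{\xi,\eta}(\ud z)$ can be made arbitrarily large in absolute value by pushing $\beta$ to the boundary of the region where the tilt remains integrable — and one must check that on that boundary $m$ either diverges to $-\infty$ or one can instead use an unbounded one-parameter family of admissible tilts. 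A subtlety to handle with care is the interplay of the truncation term $\int_{|z|\ge 1}$ in the definition of $\tilde\gamma$ versus the actual mean; since $X$ has bounded jumps this is a finite correction and $m$ is genuinely smooth, so the argument is an analytic one about a convex-conjugate-type function, but writing the case split correctly (Gaussian present vs.\ absent; negative jumps present vs.\ not, with the no-subordinator hypothesis forcing at least one of ``Gaussian in $\eta$'' or ``negative $\eta$-jumps'' whenever $\eta$ is not pure drift) is the delicate bookkeeping of the proof.
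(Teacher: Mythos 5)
You take a genuinely different route from the paper. The paper splits into three cases matching the three conditions that characterise a subordinator ($\sigma_\eta^2\neq 0$; $\sigma_\eta^2=0$ with $\Pi_\eta((-\infty,0))>0$; $\sigma_\eta^2=0,$ $\Pi_\eta((-\infty,0))=0,$ $d_\eta<0$) and in each case constructs $Y$ explicitly --- a Girsanov drift shift absorbed by the Gaussian part in the first case, and a re-rating of a compound Poisson piece of the jump part in the other two. Your proposal replaces this case split by a single Esscher tilt plus an intermediate-value argument for the mean, which is a more unified idea and avoids the explicit decomposition. The underlying mechanism is sound: because $X$ has bounded jumps, the Laplace exponent $\Lambda(\theta)=\log E(e^{\theta\eta_1})$ is finite on all of $\mathbb{R}$, $\Lambda'$ is continuous and nondecreasing with $\Lambda'(0)=E(\eta_1)\ge 0$, and --- precisely because $\eta$ is not a subordinator --- $\Lambda'(\theta)\to-\infty$ or to $d_\eta<0$ as $\theta\to-\infty$ (the Gaussian term contributes $\sigma_\eta^2\theta$; negative jumps contribute $\int_{y<0}y\,e^{\theta y}\Pi_\eta(\ud y)\to-\infty$; and if neither is present, $\int_{y>0}y\,e^{\theta y}\Pi_\eta(\ud y)\to 0$ leaving $d_\eta<0$). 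That is a clean way to use the hypothesis, arguably cleaner than the paper's.

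Two points, however, need repair. First, your opening suggestion to tilt only in the $\xi$-direction (take $\beta=(\theta,0)$) does not work in general: if $\xi$ and $\eta$ are independent the $\eta$-marginal is unaffected by that tilt and $m(\theta)\equiv E(\eta_1)$, so the intermediate value argument goes nowhere. You do notice this and pivot to tilting in the $\eta$-coordinate, which is the right move, but the proposal should commit to $\beta=(0,\theta)$ with $\theta\le 0$ from the start and carry out the limit computation outlined above. Second, and more seriously, the Esscher transform produces a new \emph{measure} $P_\beta$ on $(\Omega,\mathscr{F})$, not a new \emph{process} $Y$ on $(\Omega,\mathbb{F},P)$; the sentence ``let $Y$ be the process whose law is $P_\beta$, which exists on the same space by the density construction'' is an assertion, not a construction, and as stated is a genuine gap: nothing you have written exhibits a measurable, $\mathbb{F}$-adapted $Y$ living on the original $\Omega$. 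The paper sidesteps this by building $Y$ directly (via an added deterministic drift in the Gaussian case, and via re-rating a Poisson component in the jump cases). To close your gap you should either (i) reformulate the downstream argument so that it only requires an equivalent measure on $\mathscr{F}_T$ under which $\eta$ has zero mean --- this is really all that is used in Theorem \ref{ruin theorem} together with Lemma \ref{measure equivalence lemma} --- or (ii) show how to realise the tilted law by a pathwise transformation of $X$ on the same probability space.
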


\begin{proof}
As mentioned in Remark \ref{main remarks} (4,) the \Levy process $\eta$ is a subordinator if and only if the
following three conditions hold: $\sigma_\eta^2=0$,
$\Pi_\eta((-\infty,0))=0,$ and $d_\eta\ge 0$ where
$d_\eta:=\gamma_\eta-\int_{(0,1)}y\Pi_\eta(\ud y).$ Thus it suffices
to prove the lemma in the following three cases.

\textbf{Case 1:} Suppose $\sigma_\eta\neq 0.$ Given dependent
Brownian motions $B_\xi$ and $B_\eta$ there exists a Brownian motion
$B'$ independent of $B_\eta$, and constants $a_1$ and $a_2$ such
that $(B_\xi,B_\eta)=(a_1B'+a_2B_\eta~,~B_\eta).$ Using the \LevyIto
decomposition, $X$ can be written as the sum of two independent
processes as follows;
\begin{eqnarray*}X_t=(\xi_t~,~\eta_t)&=&(\xi_t'+B_{\xi,t}~,~\eta_t'+B_{\eta,t})\\
&=_D&(\xi_t'+a_1B_t'~,~\eta_t')+(a_2B_{\eta,t}~,~B_{\eta,t}),\end{eqnarray*}
where $(\xi'~,~\eta')$ is a pure jump \Levy process with drift,
independent of $(B_\xi~,~B_\eta).$ Let $c:=E(\eta_1)$ and define the
\Levy process $Y$ by
$$Y_t:=(\xi_t'+a_1B_t'~,~\eta_t')+\left(a_2(B_{\eta,t}-ct)~,~B_{\eta,t}-ct\right).$$
It is a simple consequence of Girsanov's theorem for Brownian
motion, e.g. Klebaner \cite{Klebaner99}, p.241, that the induced
measures of $B_{\eta,t}$ and $B_{\eta,t}-ct$ on
$\left(D([0,T]\rightarrow\mathbb{R}),\mathscr{B}^{[0,T]}\right)$ are
mutually absolutely continuous. It is trivial to show that this
implies that the induced probability measures of
$(a_2B_{\eta,t}~,~B_{\eta,t})^T$ and
$(a_2(B_{\eta,t}-ct)~,~B_{\eta,t}-ct)^T$ are mutually absolutely
continuous. Using independence, this implies that the induced
probability measures of $X^T$ and $Y^T$ are mutually absolutely
continuous. Note that if we write $Y$ as $Y=(\tau,\nu)$ then
$\nu_t=\eta_t-ct$ so $E(\nu_1)=0$ as required.

\textbf{Case 2:} Suppose $\sigma_\eta=0$ and
$\Pi_\eta((-\infty,0))>0.$ We can assume that $X$ has jumps
contained in $\Lambda,$ a square in $\mathbb{R}^2,$ i.e for all
$t>0$
$$(\Delta\xi_t,\Delta\eta_t)\in\Lambda:=\{(x,y)\in \mathbb{R}^2:-a\le x\le a, -a\le y\le a\}.$$
For any $0<b<a$ define the set $\Gamma\subset\Lambda$ by
$$\Gamma:=\{(x,y)\in\mathbb{R}^2: -a\le x\le a, -a\le y\le -b\}.$$
A \Levy measure is $\sigma$-finite and $\Pi_\eta((-\infty,0))>0$ so
there must exist a $b>0$ small enough such that $\Pi_X(\Gamma)>0.$

By Protter \cite{Protter04}, p.27, we can write
$X=\tilde{X}+\hat{X}$ where
$\tilde{X}_t:=(\tilde{\xi}_t,\tilde{\eta}_t)$ is a \Levy process
with jumps contained in $\Lambda\setminus\Gamma$ and
$\hat{X}_t:=(\hat{\xi}_t,\hat{\eta}_t)$ is a compound Poisson
process independent of $\tilde{X},$ with jumps in $\Gamma$ and
parameter $\lambda:=\Pi_X(\Gamma)<\infty.$ So we can write
$\hat{X}_t=\sum_{i=1}^{N_t}C_i$ where $N$ is a Poisson process with
parameter $\lambda$ and $(C_i)_{i\ge 1}:=(C_i',C_i'')_{i\ge1}$ is an
independent identically distributed sequence of two dimensional
random vectors, independent of $N,$ with $C_i\in\Gamma.$ Let $M$ be
a Poisson process independent of $N,$ $C_i$ and $\tilde{X},$ with
parameter $r\lambda$ for some $r\ge 1.$ Define the \Levy process $Y$
by $Y_t:=\tilde{X}_t+\sum_{i=1}^{M_t}C_i.$ We show the induced
probability measures of $X^T$ and $Y^T$ on
$\left(D([0,T]\rightarrow\mathbb{R}),\mathscr{B}^{[0,T]}\right)$ are
mutually absolutely continuous. Since $\tilde{X}$ is independent of
both compound Poisson processes, this is equivalent to showing the
induced probability measures of $\sum_{i=1}^{N_t}C_i$ and
$\sum_{i=1}^{M_t}C_i$ are mutually absolutely continuous. Let
$A\in\mathscr{B}^{[0,T]}$ and note that
\begin{equation}\label{compound poisson equation}P\left( \left(\sum_{i=1}^{N_t}C_i\right)_{0\le t\le T}\in
A\right)=\sum_{n=0}^\infty P\left(
\left(\sum_{i=1}^{N_t}C_i\right)_{0\le t\le T}\in
A\Big|N_T=n\right)P\left(N_T=n\right).\end{equation} Since $N$ is a
Poisson process, $P(N_t=n)>0$ for all $n\in\mathbb{N}.$ Thus the
left hand side of (\ref{compound poisson equation}) is zero if and
only if $P\left( \left(\sum_{i=1}^{N_t}C_i\right)_{0\le t\le T}\in
A\Big|N_T=n\right)=0$ for all $n\in\mathbb{N}.$

For any Poisson processes, regardless of parameter, Kallenburg
\cite{Kallenberg97}, p.179 shows that once we condition on the event
that $n$ jumps have occurred in time $(0,T]$, then the jump times
are uniformly distributed over $(0,T].$ This implies that
$$P\left( \left(\sum_{i=1}^{N_t}C_i\right)_{0\le t\le T}\in
A\Big|N_T=n\right)=P\left( \left(\sum_{i=1}^{M_t}C_i\right)_{0\le
t\le T}\in A\Big|M_T=n\right).$$ Thus $P\left(
\left(\sum_{i=1}^{N_t}C_i\right)_{0\le t\le T}\in A\right)=0$ if and
only if $P\left( \left(\sum_{i=1}^{M_t}C_i\right)_{0\le t\le T}\in
A\right)=0,$ which proves that the two measures are mutually
absolutely continuous, as required.

Recall that $Y_t=:(\tau_t,\nu_t)=\tilde{X}_t+\sum_{i=1}^{M_t}C_i$
where $\tilde{X}:=(\tilde{\xi},\tilde{\eta})$ and
$C_i:=(C_i',C_i'')\in\Gamma.$ Thus
$\nu_t=\tilde{\eta}_t+\sum_{i=1}^{M_t}C_i''$ which implies that
$tE(\nu_1)=tE(\tilde{\eta}_1)+r\lambda tE(C_i'')$ where
$E(\tilde{\eta}_1)>E(\eta_1)\ge 0.$ Choosing
$r=E(\tilde{\eta}_1)/|\lambda E(C_i'')|$ gives $E(\nu_1)=0$ as
required.

\textbf{Case 3:} Suppose $\sigma_\eta=0,$ $\Pi_\eta((-\infty,0))=0,$
and $d_\eta<0,$ where we allow the possibility that
$d_\eta=-\infty.$ If $\Pi_\eta((0,\infty))=0$ then $\eta_t=d_\eta t$
is deterministic, and this possibility has been excluded. So
$\Pi_\eta((0,\infty))>0$, and we can assume $X$ has jumps contained
in $\Lambda$ where we define the set $\Lambda:=\{(x,y)\in
\mathbb{R}^2:-a\le x\le a, 0< y\le a\}.$ For any $0<b<a$ define the
set $\Gamma^{(b)}\subset\Lambda$ by
$\Gamma^{(b)}:=\{(x,y)\in\mathbb{R}^2: -a\le x\le a, b\le y\le a\}.$

We can write $X=\tilde{X}^{(b)}+\hat{X}^{(b)}$ where
$\tilde{X}^{(b)}:=(\tilde{\xi_t}^{(b)},\tilde{\eta_t}^{(b)})$ is a
\Levy process with jumps contained in $\Lambda\setminus\Gamma^{(b)}$
and $\hat{X}^{(b)}:=(\hat{\xi_t}^{(b)},\hat{\eta_t}^{(b)})$ is a
compound Poisson process independent of $\tilde{X}^{(b)},$ with
jumps in $\Gamma^{(b)}$ and parameter
$\lambda^{(b)}:=\Pi_X(\Gamma^{(b)})<\infty.$

If $d_\eta\in(-\infty,0)$ then
$E\left(\tilde{\eta_t}^{(b)}\right)=d_\eta
t+t\int_{(0,b)}x\Pi_\eta(\ud x).$ Since
$\lim_{b\downarrow0}\int_{(0,b)}x\Pi_\eta(\ud x)=0,$ there exists
$b>0$ such that $E\left(\tilde{\eta_t}^{(b)}\right)<0.$ If
$d_\eta=-\infty$ then $\int_{(0,1)}x\Pi_\eta(\ud x)=\infty.$ Note
that
$E(\eta_1)=E\left(\tilde{\eta_1}^{(b)}\right)+E\left(\hat{\eta_1}^{(b)}\right)\in(0,\infty)$
since jumps are bounded, whilst
$$\lim_{b\downarrow0}E\left(\hat{\eta_t}^{(b)}\right)=\lim_{b\downarrow0}\int_{(b,a)}x\Pi_\eta(\ud
x)=\infty.$$ Hence there again exists $b>0$ such that
$E\left(\tilde{\eta_t}^{(b)}\right)<0.$

From now on we assume $b>0$ is small enough such that
$E\left(\tilde{\eta_t}^{(b)}\right)<0.$ Since a \Levy measure is
$\sigma$-finite and $\Pi_\eta((0,\infty))>0$ we can also assume
$\Pi_X(\Gamma^{(b)})>0.$ Thus we drop the $^{(b)}$ from our
labelling. We can write $\hat{X}_t=\sum_{i=1}^{N_t}C_i$ where $N$ is
a Poisson process with parameter $\lambda$ and $(C_i)_{i\ge
1}:=(C_i',C_i'')_{i\ge1}$ is an independent identically distributed
sequence of two dimensional random vectors, independent of $N,$ with
$C_i\in\Gamma.$ Let $M$ be a Poisson process independent of $N,$
$C_i$ and $\tilde{X},$ with parameter $r\lambda$ for some $r>0.$
Define the \Levy process $Y$ by
$Y_t:=\tilde{X}_t+\sum_{i=1}^{M_t}C_i.$ Then the induced probability
measures of $X^T$ and $Y^T$ are mutually absolutely continuous by
the same proof as used in Case 2. If $Y=:(\tau,\nu)$ then
$\nu_t=\tilde{\eta}_t+\sum_{i=1}^{M_t}C_i''$ with $C_i''\in[b,a].$
Since $E(\tilde{\eta}_1)<0$ for our choice of $0<b<a,$ choosing
$r=|E(\tilde{\eta}_1)|/\lambda E(C_i'')$ gives the result.
\end{proof}

\begin{proof}[Theorem \ref{ruin theorem}]
We first reduce to the case that $X=(\xi,\eta)$ has bounded jumps.
Take a general $(\xi,\eta),$ let $a>0$ and define
$$\Lambda:=\{(x,y)\in \mathbb{R}^2:-a\le x\le a, -a\le
y\le a\}.$$ We can write $X=\tilde{X}+\hat{X}$ where
$\tilde{X}_t:=(\tilde{\xi}_t,\tilde{\eta}_t)$ is a \Levy process
with jumps contained in $\Lambda$ and
$\hat{X}_t:=(\hat{\xi}_t,\hat{\eta}_t)$ is a compound Poisson
process, independent of $\tilde{X},$ with jumps in
$\mathbb{R}^2\setminus\Lambda,$ and parameter
$\lambda:=\Pi_X(\mathbb{R}^2\setminus\Lambda)<\infty.$ Note that
$$\hat{X}_t:=\sum_{0\le s\le t}\Delta X_s1_{\mathbb{R}^2\setminus\Lambda}(\Delta
X_s)$$ and by Poisson properties, $P(\hat{X}_t=0)>0$ for any $t\ge
0.$

 Suppose that $P\left(\int_0^T e^{-\tilde{\xi}_{s-}}\ud
\tilde{\eta}_s<0\right)>0.$ Then $P(Z_T<0)>0,$ because
\begin{eqnarray*}P\left(\int_0^T e^{-\xi_{s-}}\ud
\eta_s<0\right)&\ge& P\left(\int_0^T e^{-\xi_{s-}}\ud
\eta_s<0~\Big|~\hat{X}_T=0\right)P\left(\hat{X}_T=0\right)\\
&=&P\left(\int_0^T e^{-\tilde{\xi}_{s-}}\ud
\tilde{\eta}_s<0~\Big|~\hat{X}_T=0\right)P\left(\hat{X}_T=0\right)\\
&=&P\left(\int_0^T e^{-\tilde{\xi}_{s-}}\ud \tilde{\eta}_s<0\right)P\left(\hat{X}_T=0\right)\\
&>&0.
\end{eqnarray*}
Further, note that $\eta$ is not a subordinator iff we can choose
$a>0$ such that $\tilde{\eta}$ is not a subordinator. If
$\sigma_\eta^2>0$ or $d_\eta<0$ then any $a>0$ suffices. If
$\Pi_\eta((-\infty,0))>0$ then we can choose $a>0$ large enough such
that $\Pi_\eta((-a,0))>0.$ The converse is obvious. Thus the theorem
is proved if we can prove it for the case in which the jumps are
bounded. From now on assume that the jumps of $X=(\xi,\eta)$ are
contained in the set $\Lambda$ defined above. Note that this implies
that $E(\eta_1)$ is finite.

If $\eta$ is pure deterministic drift, then $\eta_t=d_\eta t$ where
$d_\eta<0,$ since $\eta$ is not a subordinator. In this case the
theorem is trivial, since $Z$ is strictly decreasing. Thus, assume
that $\eta$ is not deterministic drift. We first prove the theorem
in the case that $-c:=E(\eta_1)<0.$ Note that
\begin{eqnarray*}P\left(Z_T<0\right)&=&P\left(\int_0^Te^{-\xi_{s-}}\ud(\eta_s+cs)-\int_0^Te^{-\xi_{s-}}\ud(cs)<0\right)\\
&\ge&P\left(\int_0^Te^{-\xi_{s-}}\ud(\eta_s+cs)<0\right)\\
&>&0.
\end{eqnarray*}
The final inequality follows by Lemma \ref{martingale lemma}, which
implies that $\int_0^Te^{-\xi_{s-}}\ud(\eta_s+cs)$ is a martingale,
so $E\left(\int_0^Te^{-\xi_{s-}}\ud(\eta_s+cs)\right)=0.$ Note that
$\int_0^Te^{-\xi_{s-}}\ud(\eta_s+cs)$ is not identically zero due to
our assumption that $\eta$ is not deterministic drift.

Now we assume that $c:=E(\eta_1)\ge0.$ Lemma \ref{existence of Y
lemma} ensures there exists $Y:=(\tau,\nu)$ with bounded jumps,
adapted to $(\Omega,\mathscr{F},\mathbb{F},P),$ such that $X^T$ and
$Y^T$ are mutually absolutely continuous for all $T>0,$ and
$E(\nu_1)=0.$ If we let $W_t:=\int_0^t e^{-\tau_{s-}}\ud \nu_s$ then
Lemma \ref{martingale lemma} ensures that $W^T$ is a mean-zero
martingale. We prove that $W_T$ is not identically zero

Firstly if $\nu$ is deterministic drift then $W$ is either strictly
increasing, or strictly decreasing, hence $W_T$ is not identically
zero. If $\nu$ is not deterministic drift then the quadratic
variation $[\nu,\nu]$ is an increasing process. Hence
$$\Big[\int_0^\bullet e^{-\tau_{s-}}\ud\nu_s,\int_0^\bullet
e^{-\tau_{s-}}\ud\nu_s\Big]_T=\left(\int_0^T e^{-2\tau_{s-}}\ud
[\nu,\nu]_s\right)>0.$$ If $W_T$ is identically zero then $W_t$ must
be identically zero for all $t\le T$, since $W^T$ is a martingale.
Thus $[W,W]_T=0,$ which gives a contradiction.

Since $W$ is not identically zero, and $E(W_T)=0,$ we conclude
$P(W_T<0)>0.$ However, Lemma \ref{measure equivalence lemma} ensures
that the induced probability measures of $Z^T$ and $W^T$ are
mutually absolutely continuous. Hence $P(Z_T<0)>0.$
\end{proof}

Theorem \ref{ruin prob corollary} follows from Theorems \ref{lower
bound theorem} and \ref{sub conditions theorem}. So we now prove
these theorems.

\begin{proof}[Theorem \ref{lower bound theorem}]
Property \ref{p:decrease} is immediate from the definition while
Property \ref{p:monotone} follows from the fact that $V_t$ is
increasing in $z$ for all $t\ge 0.$ Let $W$ be the process such
$e^{-\xi_t}=\epsilon(W)_t$.  Then for any $u\in\mathbb{R},$
\begin{align*}
V_t &= e^{\xi_t}\left(z + \int_{0}^t e^{-\xi_{s-}}
d\eta_s\right)\\
&= e^{\xi_t}\left(z + \int_{0}^t e^{-\xi_{s-}}
d(\eta_s-uW_s) + u\int_{0}^t e^{-\xi_{s-}} dW_s\right)\\
&= e^{\xi_t}\left(z + \int_{0}^t e^{-\xi_{s-}}
d(\eta_s-uW_s) + u(e^{-\xi_{t}}-1)\right)\\
&= u+ e^{\xi_t}\left(z-u+\int_{0}^t e^{-\xi_{s-}}
d(\eta_s-uW_s)\right).
\end{align*}
Now if $\eta-zW$ is a subordinator then $\int_{0}^t e^{-\xi_{s-}}
d(\eta_s-zW_s) \geq 0$ so $\delta(z)=z$.  By Theorem \ref{ruin
theorem} if $\eta-z W$ is not a subordinator then for some $t$ and
some $\epsilon>0$,
\[
P\left(\int_{0}^t e^{-\xi_{s-}} d(\eta_s-zW_s) < -\epsilon\right)
> 0
\]
and so, with $V_0=z+\epsilon$ and $u=z,$
\begin{eqnarray*}
&&P\left(\inf_{t\geq 0} V_t < z \big|V_0=z+\epsilon\right)\\
=&&P\left(\inf_{t\geq 0}\left\{ z+ e^{\xi_t}\left(\epsilon
+\int_{0}^t e^{-\xi_{s-}} d(\eta_s-zW_s) \right)\right\}< z\right)\\
>&&0,\end{eqnarray*} which implies that $\delta(z) \leq
\delta(z+\epsilon) < z$ and establishes Property
\ref{p:subordinator}.

Property \ref{p:subordinator} implies Property \ref{p:fixed} if
$\eta-\delta(z) W$ is a subordinator.  So suppose that
$\eta-\delta(z) W$ is not a subordinator.  Then from the argument
above we know that for some $\epsilon>0$,
$\delta(\delta(z)+\epsilon)<\delta(z)$.  Let $T_u=\inf\{t>0:V_t\leq
u\}$. By definition of $\delta$ we have that
$P(T_{\delta(u)+\epsilon}<\infty)>0$. By the strong Markov property
of $V_t$, if $u<z,$
\begin{eqnarray*}
&&P\left(\inf_{t\geq 0} V_t < \delta(u) \big|V_0=z\right)\\
=&&P\left(\inf_{t\geq 0}
V_{t+T_{\delta(u)+\epsilon}} < \delta(u)\big|V_0=z\right)\\
=&&P\left(\inf_{t\geq 0} V_{t+T_{\delta(u)+\epsilon}} <
\delta(u)\big|T_{\delta(u)+\epsilon}<\infty,V_0=z\right)
P\left(T_{\delta(u)+\epsilon}<\infty\right)\\
\geq&&P\left(\inf_{t\geq 0} V_{t} <
\delta(u)\big|V_0=\delta(u)+\epsilon\right)
P\left(T_{\delta(u)+\epsilon}<\infty\right)\\
>&&0.
\end{eqnarray*}
This contradiction proves Property \ref{p:fixed}.
\end{proof}

\begin{proof}[Proposition \ref{W conditions lemma}]
This proof is similar to a proof in Bertoin et al.
\cite{BertoinLindnerMaller07}. We reference this paper for one of
the tedious calculations. Protter \cite{Protter04}, p.84, proves the
following formula, and shows that it defines a finite valued
semimartingale:
$$\epsilon (W)_t=e^{W_t-\frac{1}{2}[W,W]_t^c}\prod_{0<s\le t}(1+\Delta W_s)e^{-\Delta W_s},$$
where $[W,W]^c$ denotes the path-by-path continuous part of $[W,W].$
Thus
\begin{equation}\label{doleans equation}-\xi_t=\ln\epsilon(W)_t=W_t-\frac{1}{2}[W,W]_t^c+\sum_{0<s\le
t}\left(\ln(1+\Delta W_s)-\Delta W_s\right).\end{equation} So
$\Delta \xi_t=-\ln(1+\Delta W_t)$ whenever $\Delta
W_t\in(-1,\infty),$ and correspondingly, \begin{equation}\label{W
jumps equation}\Delta W_t=e^{-\Delta \xi_t}-1.\end{equation} This
proves the statements concerning the \Levy measures $\Pi_{\xi,W}$
and $\Pi_W.$

It is easy to show that $[W,W]_t^c=\sigma_W^2 t$ whenever $W$ is a
\Levy process. It also follows easily from the definition of the
random measure $N_{W,t}(\cdot,\ud x)$ that
$$\sum_{0<s\le
t}\left(\ln(1+\Delta W_s)-\Delta W_s\right)1_\Lambda(\Delta
W_s)=\int_\Lambda\left(\ln(1+x)-x\right)N_{W,t}(\cdot,\ud x)$$
whenever $0$ is not contained within the closure of $\Lambda.$ Since
Protter \cite{Protter04}, p85, shows that the series $\sum_{0<s\le
t}\left(\ln(1+\Delta W_s)-\Delta W_s\right)$ converges a.s, it
follows that the equality holds for any $\Lambda.$ Hence
(\ref{doleans equation}) becomes
\begin{equation}
\label{doleans equation updated}-\xi_t=W_t-\frac{1}{2}\sigma_W^2 t+
\int_{(-1,\infty)}\left(\ln(1+x)-x\right)N_{W,t}(\cdot,\ud
x).\end{equation} The Brownian motion component of a \Levy process
is independent of the jumps and drift. Thus for equality to hold in
the above equation, we must have $B_W=-B_\xi,$ which proves
(\ref{second covariance matrix equation}).

The proof of (\ref{two dim equation}) closely follows the method of
proving Theorem 2.2 (iv) in \cite{BertoinLindnerMaller07}, and we do
not include it.
\end{proof}
\begin{proof}[Theorem \ref{sub conditions theorem}]
The \Levy process $S^{(u)}:=\eta-uW$ is a subordinator if and only
if the following three conditions hold: $\sigma_{S^{(u)}}^2=0,$
$\Pi_{S^{(u)}}\left((-\infty,0)\right)=0,$ and $d_{S^{(u)}}\ge 0$
where $d_{S^{(u)}}:=E\left(S^{(u)}_1-\int_{(0,\infty)}zN_{S^{(u)},1}(\cdot,\ud
z)\right).$

Note that $\sigma_{S^{(u)}}^2=0$ is equivalent to $B_\eta-uB_W=0,$
which is equivalent to $B_\eta=-uB_\xi$ by Proposition \ref{W
conditions lemma}, which establishes (\ref{covariance matrix
equation}).

We show that $S^{(u)}$ has no negative jumps if and only at least
one of the dot point conditions of the theorem hold. Using (\ref{W
jumps equation}) we see that $\Delta
S^{(u)}_t=\Delta\eta_t-u\left(e^{-\Delta\xi_t}-1\right).$ If $u\ge
0$ then $\Delta S_t^{(u)}<0$ requires $(\Delta \xi_t,\Delta \eta_t)$
be contained within $A_2,$ $A_3,$ or $A_4.$ Every $(\Delta
\xi_t,\Delta \eta_t)\in A_3$ produces a $\Delta S_t^{(u)}<0.$ Recall
that the value $\theta_2$ is the supremum of all the values of $u\ge
0$ at which there can be a negative jump $\Delta S^{(u)}_t$ with
$(\Delta\xi,\Delta\eta)\in A_2.$ Note that at $u=\theta_2$  such a
jump is not possible. The obvious symmetric statement holds for
$\theta_4.$ Hence, if $u\ge 0$ then $S^{(u)}$ can have no negative
jumps if and only if $\Pi_{\xi,\eta}(A_3)=0,$  $\theta_2\le\theta_4$
and $u\in[\theta_2,\theta_4].$

If $u\le 0$ then $\Delta S_t^{(u)}<0$ requires $(\Delta \xi_t,\Delta
\eta_t)$ be contained within $A_1,$ $A_2,$ or $A_3.$ Every $(\Delta
\xi_t,\Delta \eta_t)\in A_2$ produces a $\Delta S_t^{(u)}<0.$ Recall
that the value $\theta_1$ is the supremum of all the values of $u\le
0$ at which there can be a negative jump $\Delta S_t^{(u)}$ with
$(\Delta\xi,\Delta\eta)\in A_1,$ and at $u=\theta_1$  such a jump is
not possible. The obvious symmetric statement holds for $\theta_3.$
Hence, if $u\le 0$ then $S^{(u)}$ can have no negative jumps if and
only if $\Pi_{\xi,\eta}(A_2)=0,$ $\theta_1\le\theta_3$ and
$u\in[\theta_1,\theta_3].$

Finally, if $\Pi_{\xi,\eta}(A_3)=\Pi_{\xi,\eta}(A_2)=0$ then
$\theta_3=\theta_2=0$ and so both of the above are satisfied when
$u\in[\theta_1,\theta_4].$

We now show that when the above two conditions hold, $d_{S^{(u)}}\ge
0$ is equivalent to (\ref{finite drift equation}.) We first use
(\ref{2 dim to 1 dim equation}) to convert (\ref{two dim equation})
into a relationship between the constants $\gamma_\xi$ and
$\gamma_W,$ from the individual characteristic triplets of $\xi$ and
$W.$ It becomes
\begin{equation}\label{one dim equation}\gamma_\xi+\gamma_W=\frac{1}{2}\sigma_\xi^2+\int_\mathbb{R}\left(x1_{(-1,1)}(x)+(e^{-x}-1)1_{(-\ln 2,\infty)}(x)\right)\Pi_\xi(\ud x).\end{equation}
Note that for any Borel set $\Lambda$
\begin{eqnarray*}\int_{\Lambda}zN_{\eta-uW,1}(\cdot,\ud z)&=&\int_{\{x+y\in\Lambda\}}(x+y)N_{-uW,\eta,1}\left(\cdot,\ud
(x,y)\right)\\
&=&\int_{\{y-ux\in\Lambda\}}(y-ux)N_{W,\eta,1}\left(\cdot,\ud
(x,y)\right)\\
&=&\int_{\{y-u(e^{-x}-1)\in\Lambda\}}\left(y-u(e^{-x}-1)\right)N_{\xi,\eta,1}\left(\cdot,\ud
(x,y)\right).
\end{eqnarray*}
The expected value of each of the Brownian motion components of
$\eta$ and $W$ is zero, as is the expected value of the compensated
small jump processes of $\eta$ and $W.$ Thus
\begin{eqnarray*}&&d_{S^{(u)}}\\
=&&E\left(\eta_1-uW_1-\int_{(0,\infty)}zN_{\eta_1-uW_1}(\cdot,\ud z)\right)\\
=&&\gamma_\eta-u\gamma_W+E\bigg(\int_{|y|\ge1}yN_{\eta,1}(\cdot,\ud y)-u\int_{|x|\ge1}xN_{W,1}(\cdot,\ud x)\\
&&~~~~-\int_{(0,\infty)}zN_{\eta_1-uW_1}(\cdot,\ud z)\bigg)\\
=&&\gamma_\eta-u\gamma_W+E\bigg(\int_{|y|\ge1}yN_{\eta,1}(\cdot,\ud y)-u\int_{(-\infty,-\ln2)}\left(e^{-x}-1\right)N_{\xi,1}(\cdot,\ud x)\\
&&~~~~-\int_{\{y-u(e^{-x}-1)>0\}}\left(y-u(e^{-x}-1)\right)N_{\xi,\eta,1}\left(\cdot,\ud (x,y)\right)\bigg)\\
=&&\gamma_\eta+u\gamma_\xi-\frac{1}{2}u\sigma_\xi^2+E\bigg(\int_{\mathbb{R}^2}\Big(y1_{|y|\ge1}-ux1_{|x|<1}-u(e^{-x}-1)\\
&&~~~~-\left(y-u(e^{-x}-1)\right)1_{\{y-u(e^{-x}-1)>0\}}\Big)N_{\xi,\eta,1}(\cdot,\ud(x,y))\bigg)\\
=&&\gamma_\eta+u\gamma_\xi-\frac{1}{2}u\sigma_\xi^2\\
&&~~~~-E\left(\int_{\{y-u(e^{-x}-1)>0\}\cap\{(-1,1)\times(-1,1)\}}(ux+y)N_{\xi,\eta,1}(\cdot,\ud(x,y))\right)\\
=&&\tilde{\gamma}_\eta+u\tilde{\gamma_\xi}-\frac{1}{2}u\sigma_\xi^2\\
&&~~~~-E\left(\int_{\{y-u(e^{-x}-1)>0\}\cap\{x^2+y^2<1\}}(ux+y)N_{\xi,\eta,1}(\cdot,\ud(x,y))\right),
\end{eqnarray*}
where the third equality follows using (\ref{one dim equation}), the
fourth equality follows since $S^{(u)}$ has no negative jumps, so
$N_{\xi,\eta,1}\left(\{y-u(e^{-x}-1)\le0\}\right)=0,$ and the final
equality follows by (\ref{2 dim to 1 dim equation}). Thus we are
done if we can exchange integration and expectation in the above
expression. Now if $f(x,y)$ is a non-negative measurable function
and $\Lambda$ is a Borel set in $\mathbb{R}^2$ then the monotone
convergence theorem implies that
\[
E\left(\int_\Lambda
f(x,y)N_{\xi,\eta,1}(\cdot,\ud(x,y))\right)=\int_\Lambda
f(x,y)\Pi_{\xi,\eta}(\ud(x,y)).
\]
For general $f(x,y),$ if $\int_\Lambda
f^+(x,y)\Pi_{\xi,\eta}(\ud(x,y))$ or $\int_\Lambda
f^-(x,y)\Pi_{\xi,\eta}(\ud(x,y))$ is finite, then the following is
well-defined;
\begin{eqnarray*}
&&E\left(\int_\Lambda
f(x,y)N_{\xi,\eta,1}(\cdot,\ud(x,y))\right)\\
=&&\int_\Lambda f^+(x,y)\Pi_{\xi,\eta}(\ud(x,y))-\int_\Lambda
f^-(x,y)\Pi_{\xi,\eta}(\ud(x,y))\\
=&&\int_\Lambda f(x,y)\Pi_{\xi,\eta}(\ud(x,y)).
\end{eqnarray*}
However, using the fact that $0<e^{-x}-1+x<x^2 $ whenever $|x|< 1,$
we have
\begin{eqnarray*}
&&\int_{\{y-u(e^{-x}-1)>0\}\cap\{x^2+y^2<1\}}(ux+y)^-\Pi_{\xi,\eta}(\ud(x,y))\\
=&&\int_{\{y-u(e^{-x}-1)>0\}\cap\{x^2+y^2<1\}}-(ux+y)1_{\{ux+y\le0\}}\Pi_{\xi,\eta}(\ud(x,y))\\
\le&&\int_{\{y-u(e^{-x}-1)>0\}\cap\{x^2+y^2<1\}}\left(y-u(e^{-x}-1)-(ux+y)\right)1_{\{ux+y\le0\}}\Pi_{\xi,\eta}(\ud(x,y))\\
=&&\int_{\{y-u(e^{-x}-1)>0\}\cap\{x^2+y^2<1\}}-u(e^{-x}-1+x)1_{\{ux+y\le0\}}\Pi_{\xi,\eta}(\ud(x,y))\\
\le&&\int_{\{y-u(e^{-x}-1)>0\}\cap\{x^2+y^2<1\}}|u|x^21_{\{ux+y\le0\}}\Pi_{\xi,\eta}(\ud(x,y))\\
\le&&|u|\int_\mathbb{R}\min\left\{1,x^2\right\}\Pi_\xi(\ud x),\\
\end{eqnarray*}
which is finite since $\Pi_\xi$ is a \Levy measure.
\end{proof}

\begin{proof}[Theorem \ref{ruin prob corollary}]
Clearly $\psi(z)=0$ if and only if $\delta(z)\ge0.$ By Theorem
\ref{lower bound theorem}, this is equivalent to the condition that
there exists $0\le u\le z$ such that $\delta(u)=u.$ Combining this
fact with Theorem \ref{sub conditions theorem} proves Theorem
\ref{ruin prob corollary}.
\end{proof}

\begin{proof}[Theorem \ref{ruin formula theorem}]
Define $$U_t:=e^{\xi_t}(Z_\infty -Z_t)=e^{\xi_t}\int_{t+}^\infty
e^{-\xi_{s-}}\ud \eta_s.$$

Note that since we are integrating over $(t,\infty)$ there are no
predictability problems moving $e^{\xi_t}$ under the integral sign,
as there would have been if we were integrating over $[t,\infty).$
Thus $U_t=\int_{t+}^\infty e^{-(\xi_{s-}-\xi_t)}\ud \eta_s,$ from
which it follows, from \Levy properties, that $U_t$ is independent
of $\mathscr{F}_t$ and that $U_{T_z}$ conditioned on $T_z<\infty$ is
independent of $\mathscr{F}_{T_z}.$

Since $(\xi,\eta)$ is a \Levy process we know that for any $u>0$ and
$t>0$
\begin{equation}\label{equality}(\hat{\xi}_{u-},\hat{\eta}_u)
:=(\xi_{(t+u)-}-\xi_t~,~\eta_{t+u}-\eta_t)=_D(\xi_{u-},\eta_u).\end{equation}
Thus
\begin{eqnarray*}U_t&=&\int_{t+}^\infty e^{-(\xi_{s-}-\xi_t)}\ud \eta_s
=\int_{0+}^\infty e^{-(\xi_{(t+u)-}-\xi_t)}\ud \eta_{t+u}\\
&=&\int_{0+}^{\infty}e^{-(\xi_{(t+u)-}-\xi_t)}\ud
(\eta_{t+u}-\eta_t)
=\int_{0+}^{\infty}e^{-\hat{\xi}_{u-}}\ud\hat{\eta}_u\\
&=&_D \int_{0+}^{\infty}e^{-\xi_{u-}}\ud \eta_u\ \ \
(\textrm{by~}(\ref{equality}))
=Z_\infty \ \ \ \ (\textrm{since}~\Delta\eta_0=0).\\
\end{eqnarray*}
In particular, for any Borel set $A,$
\begin{equation}\label{cond equality} P\left(U_{T_z}\in
A\big|T_z<\infty\right)=P(Z_\infty\in A). \end{equation}
Next note that if $\omega\in\{T_z<\infty\}$ then by definition of
$U,$
\begin{eqnarray*}z+Z_\infty&=&z+Z_{T_z}+e^{-\xi_{T_z}}U_{T_z}\\
&=&e^{-\xi_{T_z}}\left(e^{\xi_{T_z}} (z+Z_{T_z})+U_{T_z}\right)\\
&=&e^{-\xi_{T_z}}(V_{T_z}+U_{T_z}).\\
\end{eqnarray*}
This implies that \begin{equation}\label{subset equality}
P(T_z<\infty,z+Z_\infty<0)=P(T_z<\infty, V_{T_z}+U_{T_z}<0).
\end{equation}
Finally note that $(Z_\infty<-z)\subset(T<\infty)$ since the
convergence from $ Z_t$ to $Z_\infty$ is a.s. Thus
\begin{eqnarray*}P\left(z+Z_\infty<0\right)&=&P\left(T_z<\infty,z+Z_\infty<0\right)\\
&=&P\left(T_z<\infty,V_{T_z}+U_{T_z}<0\right)\ \ \ \ (\textrm{by~}(\ref{subset equality}))\\
&=&E\left(P(T_z<\infty,V_{T_z}+U_{T_z}<0\big|\mathscr{F}_{T_z})\right)\\
&=&\int_{T_z<\infty}P\left(V_{T_z}+U_{T_z}<0\big|\mathscr{F}_{T_z}\right)(\omega)P(\ud\omega).\\
\end{eqnarray*}
But if $T_z(\omega)<\infty$ then
\begin{eqnarray*}P\left(V_{T_z}+U_{T_z}<0\big|\mathscr{F}_{T_z}\right)(\omega)
&=&P\left(V_{T_z}(\omega)+U_{T_z}<0\big|\mathscr{F}_{T_z}\right)(\omega)\\
&=&P\left(U_{T_z}<-V_{T_z}(\omega)\big|T_z<\infty\right)\\
&=&P\left(Z_\infty<-V_{T_z}(\omega)\right)\ \ \ (\textrm{by~}(\ref{cond equality})).\\
\end{eqnarray*}
The second last equality follows since $U_{T_z}$ conditioned on
$T_z<\infty$ is independent of $\mathscr{F}_{T_z}.$ Thus we obtain
the required formula from
\begin{eqnarray*}G(-z)&=&\int_{T_z<\infty}G(-V_{T_z})(\omega)P(\ud\omega)\\
&=&E\left(G(-V_{T_z})1_{T_z<\infty}\right)\\
&=&E\left(G(-V_{T_z})1_{T_z<\infty}\big|T_z<\infty\right)P(T_z<\infty)\\
&&~~~~~~~+E\left(G(-V_{T_z})1_{T_z<\infty}\big|T_z=\infty\right)P(T_z=\infty)\\
&=&E\left(G(-V_{T_z})\big|T_z<\infty\right)P(T_z<\infty).
\end{eqnarray*}
\end{proof}

\textbf{Acknowledgements.} We are grateful to Professor Ross Maller
for several close readings of the paper and constructive comments
which helped us to substantially improve the readability.

\bibliography{bibliography}
\bibliographystyle{plain}

\end{document}